\title{Borel's rank theorem for Artin $L$-functions}
\author{
	Ningchuan Zhang
}
\subjclass[2020]{Primary 19F27; Secondary 55P62, 55P91.}
\address{Department of Mathematics, University of Pennsylvania, Philadelphia, PA 19104, USA}
\begin{document}
	\begin{abstract}
		Borel's rank theorem identifies the ranks of algebraic $K$-groups of the ring of integers of a number field with the orders of vanishing of the Dedekind zeta function attached to the field. Following the work of Gross, we establish a version of this theorem for Artin $L$-functions by considering equivariant algebraic $K$-groups of number fields with coefficients in rational Galois representations. This construction involves twisting algebraic $K$-theory spectra with rational equivariant Moore spectra. We further discuss integral equivariant Moore spectra attached to Galois representations and their potential applications in $L$-functions. 
	\end{abstract}
	\maketitle
	\section{Introduction}
	Let $\Fbb$ be a number field and $\Ocal_\Fbb$ be its ring of integers. The Dedekind  zeta function attached to $\Fbb$ is defined to be:
	\begin{equation*}
		\zeta_\Fbb(s)=\sum_{(0)\ne \Ical\trianglelefteq \Ocal_\Fbb}\frac{1}{|\Ocal_\Fbb/\Ical|^s}=\prod_{(0)\ne \mathfrak{p}\trianglelefteq \Ocal_\Fbb}\frac{1}{1-|\Ocal_\Fbb/\mathfrak{p}|^{-s}},
	\end{equation*}
	where $\Ical$ ranges over all non-zero ideals of $\Ocal_\Fbb$ in the summation, and $\mathfrak{p}$ ranges over all non-zero prime (maximal) ideals of $\Ocal_\Fbb$ in the product. This summation converges when $\Re(s)>1$ and admits an analytic continuation to $\Cbb\backslash\{1\}$. The orders of vanishing of $\zeta_\Fbb(s)$ at non-positive integers carry the following algebraic information:
	\begin{thm}[Quillen, Borel, {\cite{Quillen_fin_gen,Borel_cohomologie,Borel_stable_real_cohomology}}]\label{thm:QB}
		Algebraic $K$-groups of $\Ocal_\Fbb$ are all finitely generated abelian groups. In even degrees, $K_{2n}(\Ocal_\Fbb)$ is a finite abelian group when $n\ge 1$. Denote by $r_1$ and $r_2$ the numbers of real and conjugate pairs of complex places of $\Fbb$, respectively. In odd degrees, we have
		\begin{align*}
			\dim_\Q K_{2n-1}(\Ocal_\Fbb)\otimes \Q=\dim_\Q \pi_{2n-1}\left(K(\Ocal_\Fbb)\wedge M(\Q)\right)&=\left\{\begin{array}{cl}
				r_1+r_2-1, &n=1;\\
				r_1+r_2, & n>1\text{ odd};\\
				r_2,& n\text{ even},\\
			\end{array}\right.\\&=\ord_{s=1-n}\zeta_\Fbb(s),
		\end{align*}
		where $K(\Ocal_{\Fbb})$ is the algebraic $K$-theory \emph{spectrum} of $\Ocal_{\Fbb}$, and $M(\Q)$ is the Moore spectrum of $\Q$.
	\end{thm}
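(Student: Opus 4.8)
The plan is to treat the statement as a synthesis of three essentially independent classical results — Quillen's finite generation theorem, Borel's computation of the stable real cohomology of arithmetic groups, and the analytic behaviour of $\zeta_\Fbb$ dictated by its functional equation — importing the first two and then carrying out the (elementary) comparison of Borel's ranks with the orders of vanishing. Observe first that the middle equality is formal: $M(\Q)$ is the filtered colimit of the sphere spectrum along the multiplication-by-$k$ maps for $k\ge 2$, so for every spectrum $X$ one has $\pi_*\bigl(X\wedge M(\Q)\bigr)\cong\operatorname{colim}\bigl(\pi_*X\xrightarrow{\times2}\pi_*X\xrightarrow{\times3}\cdots\bigr)\cong\pi_*(X)\otimes_{\mathbb Z}\Q$; apply this with $X=K(\Ocal_\Fbb)$.

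For the finiteness assertions I would invoke Quillen's theorem \cite{Quillen_fin_gen} that the $K$-groups of a regular ring of finite type over $\mathbb Z$ are finitely generated; $\Ocal_\Fbb$ is such a ring, of Krull dimension one. The proof combines the localization/dévissage fiber sequence relating $K(\Ocal_\Fbb)$, $K(\Fbb)$ and $\bigvee_{\mathfrak p}K(\Ocal_\Fbb/\mathfrak p)$, Quillen's computation that the $K$-groups of a finite field are finite (in particular rationally trivial) in positive degrees, and his homological finiteness estimates for $\mathrm{GL}_N(\Ocal_\Fbb)$. Granting finite generation, the finiteness of $K_{2n}(\Ocal_\Fbb)$ for $n\ge1$ follows as soon as the rational computation below gives $K_{2n}(\Ocal_\Fbb)\otimes\Q=0$.

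For the rank formula I would follow Borel \cite{Borel_cohomologie,Borel_stable_real_cohomology}. Via the plus construction $K(\Ocal_\Fbb)\simeq K_0(\Ocal_\Fbb)\times B\mathrm{GL}(\Ocal_\Fbb)^+$ and the fact that $B\mathrm{GL}(\Ocal_\Fbb)^+$ is a connected $H$-space, the Milnor--Moore structure theorem for graded-commutative Hopf algebras over $\Q$ gives, for $n\ge1$, an isomorphism $K_n(\Ocal_\Fbb)\otimes\Q\cong\mathrm{Prim}^nH^*\bigl(\mathrm{GL}(\Ocal_\Fbb);\Q\bigr)$. Borel's stability and comparison theorem identifies the stable cohomology $H^*\bigl(\mathrm{GL}_N(\Ocal_\Fbb);\mathbb R\bigr)$ with the continuous cohomology $H^*_{\mathrm{cts}}\bigl(\mathrm{GL}_N(\Fbb\otimes_\Q\mathbb R);\mathbb R\bigr)\cong\bigotimes_{v\mid\infty}H^*_{\mathrm{cts}}\bigl(\mathrm{GL}_N(\Fbb_v);\mathbb R\bigr)$, and the van Est isomorphism rewrites each archimedean factor as the real cohomology of the compact dual symmetric space: an exterior algebra on generators in degrees $1,5,9,13,\dots$ at a real place and in degrees $1,3,5,7,\dots$ at a complex place. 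Reading off primitives degree by degree, a real place contributes in degrees $\{1,5,9,\dots\}$ and a complex place in degrees $\{1,3,5,\dots\}$, so for $m=2n-1\ge 3$ one gets $\dim_\Q K_m(\Ocal_\Fbb)\otimes\Q=r_1+r_2$ when $m\equiv1\pmod4$ (i.e.\ $n$ odd), $=r_2$ when $m\equiv3\pmod4$ (i.e.\ $n$ even), and $K_{2n}(\Ocal_\Fbb)\otimes\Q=0$ for all $n\ge1$. In degree $m=1$ the naive count $r_1+r_2$ must be corrected: $H^1(\mathrm{GL}(\Ocal_\Fbb);\mathbb R)=\operatorname{Hom}(\Ocal_\Fbb^\times,\mathbb R)$ has dimension $r_1+r_2-1$ by Dirichlet's unit theorem and $K_1(\Ocal_\Fbb)=\Ocal_\Fbb^\times$, which supplies the exceptional value at $n=1$.

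It remains to identify these ranks with $\ord_{s=1-n}\zeta_\Fbb(s)$, which I would read off from the functional equation. Let $\Lambda_\Fbb(s)=|d_\Fbb|^{s/2}\,\Gamma_{\mathbb R}(s)^{r_1}\,\Gamma_{\mathbb C}(s)^{r_2}\,\zeta_\Fbb(s)$, with $d_\Fbb$ the discriminant, $\Gamma_{\mathbb R}(s)=\pi^{-s/2}\Gamma(s/2)$ and $\Gamma_{\mathbb C}(s)=2(2\pi)^{-s}\Gamma(s)$; then $\Lambda_\Fbb$ is holomorphic on $\mathbb C\setminus\{0,1\}$, satisfies $\Lambda_\Fbb(s)=\Lambda_\Fbb(1-s)$, and has simple poles and no zeros at $s=0$ and $s=1$. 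For $n\ge2$ the Euler product shows $\zeta_\Fbb(n)\ne0$, so $\Lambda_\Fbb(1-n)=\Lambda_\Fbb(n)$ is finite and non-zero, whence $\ord_{s=1-n}\zeta_\Fbb(s)$ equals the order of the pole of $\Gamma_{\mathbb R}(s)^{r_1}\Gamma_{\mathbb C}(s)^{r_2}$ at $s=1-n$: the factor $\Gamma_{\mathbb C}^{r_2}$ has a pole of order $r_2$ at every non-positive integer, while $\Gamma_{\mathbb R}^{r_1}$ contributes a further pole of order $r_1$ exactly when $1-n$ is a non-positive even integer, i.e.\ when $n$ is odd; this gives $r_1+r_2$ for $n$ odd and $r_2$ for $n$ even. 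For $n=1$, $\Lambda_\Fbb$ has a simple pole at $s=0$ whereas the $\Gamma$-factors have a pole of order $r_1+r_2$ there, so $\ord_{s=0}\zeta_\Fbb(s)=-1+(r_1+r_2)=r_1+r_2-1$. These agree with the ranks computed above, completing the proof. The one genuinely hard input is Borel's stable cohomology theorem, which rests on reduction theory for arithmetic groups, a vanishing theorem reducing group cohomology to continuous cohomology in a stable range, the van Est isomorphism, and the known cohomology of compact dual symmetric spaces; the remaining steps, including the $\Gamma$-factor bookkeeping, are routine.
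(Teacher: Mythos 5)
The paper does not give a proof of \Cref{thm:QB}; it simply cites Quillen and Borel, so there is no ``paper's approach'' to compare against. Your proposal is a correct and faithful outline of the classical argument those references supply, and it sits at about the right level of detail for a survey of this result.

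A few small checks and remarks. Your identification $\pi_*\bigl(X\wedge M(\Q)\bigr)\cong\pi_*(X)\otimes_{\Z}\Q$ via $M(\Q)=\operatorname{colim}\bigl(S^0\xrightarrow{\times 2}S^0\xrightarrow{\times 3}\cdots\bigr)$ is fine; it is also exactly the content of \Cref{prop:rational_ME} together with \Cref{lem:eUCT} later in the paper, so you could alternatively cite those. The degree bookkeeping for Borel's theorem is right: at a real place the compact dual is $U/O$ with rational cohomology exterior on generators in degrees $1,5,9,\ldots$, at a complex place it is $U$ with generators in degrees $1,3,5,\ldots$, and reading off primitives in degree $m=2n-1$ gives $r_1+r_2$ when $m\equiv 1\pmod 4$ and $r_2$ when $m\equiv 3\pmod 4$, which is the dichotomy $n$ odd versus $n$ even. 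You are also correct to flag $m=1$ as lying outside the clean statement of Borel's comparison and to recover $r_1+r_2-1$ directly from Dirichlet's unit theorem via $K_1(\Ocal_\Fbb)=\Ocal_\Fbb^\times$. Finally, the $\Gamma$-factor computation for $\ord_{s=1-n}\zeta_\Fbb(s)$ is exactly the standard one: $\Gamma_{\Cbb}$ contributes a simple pole at every non-positive integer, $\Gamma_{\Rbb}$ only at non-positive even integers, and the $n=1$ case is shifted by one because $\Lambda_\Fbb$ itself has a simple pole at $s=0$. I see no gaps; the one caveat worth making explicit is that Borel's theorem requires the stable range, so the ``primitives degree by degree'' step should be understood as taking $N$ large for $\mathrm{GL}_N$ before stabilizing.
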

	The main goal of this paper is to generalize \Cref{thm:QB} to \textbf{Artin $L$-functions}. 
	\begin{defn}\label{defn:artin_L}
		 Consider a finite $G$-Galois extension of number fields $\Fbb/\Kbb$. Let $\rho\colon G\to \aut_\Lbb(V)$ be a representation of $G$ valued in a finite dimensional vector space $V$ over a number field $\Lbb$. The Artin $L$-function attached to $\rho$ is defined to be the Euler product:
		\begin{equation*}
			L(s;\rho)=\prod_{\mathfrak{p}}\frac{1}{\det\left[\id-|\Ocal_{\Kbb}/\mathfrak{p}|^{-s}\cdot \rho(\mathrm{Frob}_\mathfrak{p})\left| V^{\mathfrak{I_p}}\right.\right]}.
		\end{equation*}
		In this formula:
		\begin{itemize}
			\item $\mathfrak{p}$ ranges over all non-zero prime ideals of $\Ocal_{\Kbb}$.
			\item $\mathfrak{I_p}$ is the inertia subgroup of $G$ at a prime $\mathfrak{P}$ in $\Fbb$ over $\mathfrak{p}$. Different choices of $\mathfrak{P}$ result in conjugate inertia subgroups $\mathfrak{I_p}$, which does not affect the definition. 
			\item $\mathrm{Frob}_\mathfrak{p}$ is a Frobenius element.
		\end{itemize}
		See full details of the definition in \cite{Murty_Artin_L}.
	\end{defn}
	\begin{exmps}
		\begin{enumerate}
			\item When $\rho$ is the $1$-dimension trivial representation of $G=\gal(\Fbb/\Kbb)$, the Artin $L$-function $L(s,\rho)$ is the Dedekind zeta function of $\Kbb$.
			\item When $\Fbb/\Kbb=\Q(\zeta_N)/\Q$ is the $N$-th cyclotomic extension of $\Q$, and $\rho=\chi\colon \znx\cong \gal(\Q(\zeta_N)/\Q)\to \Cx$ is a Dirichlet character, we recover the \textbf{Dirichlet $L$-function} $L(s,\chi)$ attached to $\chi$:
			\begin{equation*}
				L(s,\chi)=\sum_{n=1}^{\infty}\frac{\chi(n)}{n^s},\quad \chi(n)=0 \text{ if }\gcd(n,N)\ne 1.
			\end{equation*}
		\end{enumerate}
	\end{exmps}
	Building on an algebraic Borel's rank \Cref{thm:alg_QB_AL} for Artin $L$-functions by Gross, we will prove:
	\begin{thm}\label{thm:main}
	Consider the Artin $L$-function $L(s,\rho)$ attached to a Galois representation $\rho\colon G=\gal(\Fbb/\Kbb)\to \aut_\Lbb(V)$ as in \Cref{defn:artin_L}. Denote the $G$-equivariant Moore spectrum associated to $\rho$ by $M(\underline{\rho})$. Then the orders of vanishing of $L(s,\rho)$ at non-positive integers are computed by the dimensions of equivariant homotopy groups:
		\begin{equation*}
			\ord_{s=1-n}L(s,\rho)=\dim_\Lbb\pi_{2n-1}\left[\left(K(\Ocal_{\Fbb})\wedge M\left(\underline{\rho}\right)\right)^{hG}\right].
		\end{equation*}	
	\end{thm}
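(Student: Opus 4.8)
The plan is to deduce \Cref{thm:main} from the algebraic Quillen--Borel theorem for Artin $L$-functions (\Cref{thm:alg_QB_AL}) by a homotopy-fixed-point computation. The point of twisting by a \emph{rational} equivariant Moore spectrum is precisely that it forces the relevant descent spectral sequence to collapse, so the equivariant homotopy groups see nothing beyond the multiplicity of $\rho$ that Gross's theorem controls.

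First I would identify the homotopy type of $K(\Ocal_{\Fbb})\wedge M(\underline{\rho})$ together with its $G$-action. Here $K(\Ocal_{\Fbb})$ carries the $G$-action induced by functoriality from the Galois action on $\Ocal_{\Fbb}$, so we work with a naive $G$-spectrum, which is all that $(-)^{hG}$ needs. By construction $M(\underline{\rho})$ is a rational Moore spectrum whose homotopy is concentrated in degree $0$, where it is $V$ with $G$ acting through $\rho$; since $V$ is a finite-dimensional $\Q$-vector space, hence flat over $\mathbb{Z}$, the Künneth formula for smashing with a Moore spectrum is exact and yields an isomorphism of $\Lbb[G]$-modules
\begin{equation*}
	\pi_k\!\left(K(\Ocal_{\Fbb})\wedge M(\underline{\rho})\right)\ \cong\ K_k(\Ocal_{\Fbb})\otimes_{\mathbb{Z}}V ,
\end{equation*}
where $G$ acts diagonally (by the Galois action on $K_k(\Ocal_{\Fbb})$ and by $\rho$ on $V$). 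In particular the left-hand side is a $\Q$-vector space in every degree. (I will keep track of whether the action on $M(\underline{\rho})$ is $\rho$ or its contragredient $\rho^\vee$; since $\ord_{s=1-n}L(s,\rho)=\ord_{s=1-n}L(s,\rho^\vee)$ at the real point $s=1-n$, this does not change the final count.)

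Next I would run the homotopy-fixed-point spectral sequence of the $G$-spectrum $K(\Ocal_{\Fbb})\wedge M(\underline{\rho})$:
\begin{equation*}
	E_2^{s,t}=H^s\!\left(G;\ K_t(\Ocal_{\Fbb})\otimes_{\mathbb{Z}}V\right)\ \Longrightarrow\ \pi_{t-s}\!\left[\left(K(\Ocal_{\Fbb})\wedge M(\underline{\rho})\right)^{hG}\right].
\end{equation*}
Every coefficient group is a $\Q$-vector space and $G$ is finite, so $H^s(G;-)=0$ for all $s>0$; the spectral sequence therefore degenerates onto the edge and
\begin{equation*}
	\pi_{m}\!\left[\left(K(\Ocal_{\Fbb})\wedge M(\underline{\rho})\right)^{hG}\right]\ \cong\ \left(K_m(\Ocal_{\Fbb})\otimes_{\mathbb{Z}}V\right)^G .
\end{equation*}
In positive even degrees this vanishes because $K_{2n}(\Ocal_{\Fbb})$ is finite, and in odd degrees it is a finite-dimensional $\Lbb$-vector space, matching the shape of the statement.

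It remains to identify $\dim_{\Lbb}\left(K_{2n-1}(\Ocal_{\Fbb})\otimes_{\mathbb{Z}}V\right)^G$ with $\ord_{s=1-n}L(s,\rho)$. Using that $V$ is finite dimensional over $\Lbb$, one rewrites these invariants as $\mathrm{Hom}_{\Lbb[G]}\!\left(V^\vee,\ K_{2n-1}(\Ocal_{\Fbb})\otimes_{\mathbb{Z}}\Lbb\right)$, and extending scalars along an embedding $\Lbb\hookrightarrow\Cbb$ identifies its dimension with the multiplicity of $\rho$ (equivalently of $\rho^\vee$, the $K$-group being the complexification of a rational $G$-representation) in $K_{2n-1}(\Ocal_{\Fbb})\otimes_{\mathbb{Z}}\Cbb$. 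That multiplicity is exactly the quantity computed by Gross in \Cref{thm:alg_QB_AL}, which for $\Fbb=\Kbb$ and $\rho$ trivial reduces to \Cref{thm:QB}; invoking it closes the argument. I expect the real friction to lie not in the spectral sequence---which collapses for free once we have rationalized---but in the bookkeeping: making the Künneth isomorphism above genuinely $\Lbb[G]$-linear by pinning down the precise homotopy and $G$-module data carried by $M(\underline{\rho})$, and reconciling $V$ versus $V^\vee$ and the possibly non-splitting coefficient field $\Lbb$ with the exact formulation of \Cref{thm:alg_QB_AL}.
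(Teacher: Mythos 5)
Your proposal follows essentially the same route as the paper: the isomorphism $\pi_*\!\left(K(\Ocal_\Fbb)\wedge M(\underline{\rho})\right)\cong K_*(\Ocal_\Fbb)\otimes\underline{\rho}$ of $G$-modules is the paper's \Cref{lem:eUCT}, the rational collapse of the homotopy fixed point spectral sequence is \Cref{lem:rational_FPT}, and the algebraic input is Gross's \Cref{thm:alg_QB_AL}. Two small notes: the $\Lbb[G]$-linearity of the K\"unneth isomorphism that you flag as the main friction is precisely what \Cref{lem:eUCT} establishes via naturality of the Atiyah--Hirzebruch spectral sequence (and the $\Lbb$-structure comes from $M(\underline{\rho})$ being an $H\Lbb$-module spectrum with $G$ acting by $H\Lbb$-module maps, via \Cref{prop:rational_ME} and \Cref{prop:rational_eMoore}), and your detour through $\mathrm{Hom}_{\Lbb[G]}$ and scalar extension to $\Cbb$ is unnecessary because \Cref{thm:alg_QB_AL} already states the result in the exact form $\dim_\Lbb\!\left[K_{2n-1}(\Ocal_\Fbb)\otimes_\Z\underline{\rho}\right]^G$ that you need.
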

	\begin{rem}\label{rem:htpy_coeff}
		Let $A$ be an abelian group and $M(A)$ be its Moore spectrum (\Cref{defn:Moore}). In stable homotopy theory, homotopy groups of a spectrum $X$ \emph{with coefficients in $A$} are defined to be:
		\begin{equation*}
			\pi_n(X;A)=\pi_n(X\wedge M(A)).
		\end{equation*}
		In this way, \Cref{thm:main} can be translated to saying equivariant algebraic $K$-groups of the ring of integers of a number field $\Fbb$ \emph{with coefficients in a rational Galois representation} $\rho$ compute the orders of vanishing of the corresponding Artin $L$-function $L(s,\rho)$ at non-positive integers. 
	\end{rem}
	The organization of this paper is as follows:
	\begin{itemize}
		\item In \Cref{sec:background}, we review Galois group actions on rational algebraic $K$-groups of number fields (\Cref{thm:QB_gal}), and state an \emph{algebraic} Borel's rank theorem for Artin $L$-functions (\Cref{thm:alg_QB_AL}) by Gross in \cite{Gross_Artin_L}.
		\item  In \Cref{sec:rational_alg_K}, we will study the rational equivariant algebraic $K$-theory of number fields. In the first half of the section, we will construct a spectral lifting of the equivariant Borel regulator maps in \Cref{thm:spectral_Borel_reg} and give explicit descriptions of the rational equivariant homotopy type of the algebraic $K$-theory spectrum $K(\Ocal_{\Fbb})$ with respect to the Galois group $\gal(\Fbb/\Q)$ in \Cref{prop:G-cofib} and \Cref{exmp:emb_gal_c2}. \Cref{thm:main} will be proved in the second half of this section. 
		\item \Cref{thm:main} relates \emph{rational} equivariant algebraic $K$-theory of number fields to Artin $L$-function. In \Cref{sec:int_eMoore}, we consider a potential \emph{integral} version of this connection. Non-equivariantly, the Quillen-Lichtenbaum Conjecture \Cref{thm:QLC}, proved by Voevodsky-Rost, connects special values of Dedekind zeta functions $\zeta_\Fbb(s)$ to torsion subgroups of $K_{*}(\Ocal_{\Fbb})$. To generalize this to Artin $L$-functions, a first obstruction is Steenrod's \Cref{quest:Steenrod} on the existence of integral equivariant Moore spectra attached to Galois representations. My previous 
		work in \cite{nz_Dirichlet_J} implies that integral equivariant Moore spectra attached to abelian characters of finite groups always exist (\Cref{cor:eMoore_char}). In my 
		current joint work in progress with Elden Elmanto, we will study a potential Quillen-Lichtenbaum Conjecture for Dirichlet $L$-functions using those integral equivariant Moore spectra attached Dirichlet characters. 
	\end{itemize}

	\subsection*{Acknowledgments} 
	I would like to thank Elden Elmanto,  Mona Merling, and Maximilien P\'eroux for their comments and suggestions after carefully reading through earlier drafts of this paper. 
	I also want to thank Matt Ando, Mark Behrens, Ted Chinburg,  Guchuan Li,  and Charles Rezk for helpful discussions related to this project.  Finally, I would like to thank the anonymous referee for many helpful comments and suggestions on revisions.
	\section{Background: Galois actions on rational algebraic $K$-groups}\label{sec:background}
	 To prove \Cref{thm:main}, we first recall Galois actions on rational algebraic $K$-groups of $\Ocal_\Fbb$, described by Gross in \cite{Gross_Artin_L}. Let $\emb(\Fbb)$ be the set of field embeddings $\Fbb\hookrightarrow \Cbb$. Denote the cyclic group of order $2$ by $C_2$ with generator $c$. Then $\emb(\Fbb)$ is a $C_2$-$G$-set, where the Galois group $G$ acts by pre-compositions and $C_2$ acts by complex conjugation. 
	Notice for any embedding $v\colon\Fbb \hookrightarrow \Cbb$ and $g\in G$, we have $c\cdot (v\circ g)=(c\cdot v)\circ g$. This implies the $(-1)^n$-eigenspace of the $C_2$-action
	\begin{equation*}
		Y_n(\Fbb)=\left\{\left.v\in \Q^{\emb(\Fbb)}\right| c\cdot v=(-1)^n v\right\}
	\end{equation*}
	is a rational $G$-representation. In proving \Cref{thm:QB}, Dirichlet (for $n=1$) and Borel (for $n>1$) constructed the regulator maps 
	\begin{equation}\label{eqn:Borel_reg}
		\begin{tikzcd}
			R^n_\Fbb\colon K_{2n-1}(\Ocal_\Fbb)\rar& Y_{n-1}(\Fbb)\otimes_\Q\Rbb.
		\end{tikzcd}
	\end{equation}
	Those maps are described in \cite{Gross_Artin_L} as follows. Each embedding $\iota\colon \Fbb\hookrightarrow \Cbb$ induces a map in algebraic $K$-groups, assembling into a Galois-equivariant map that factors through $C_2$-fixed points
	\begin{equation}\label{eqn:assemble_emb}
		\lambda_{2n-1}\colon K_{2n-1}(\Ocal_{\Fbb})\longrightarrow \map^{C_2}(\emb(\Fbb),K_{2n-1}(\Cbb)).
	\end{equation}
	Let $\Rbb(m)$ be the $1$-dimensional real $C_2$-representation with $c\in C_2$ acting by multiplication by $(-1)^m$.  By identifying rational $K$-theory of a ring $A$ with primitives in the rational homology groups of $\GL(A)$, we obtain a family of $C_2$-equivariant maps, called the universal Borel regulators \cite[page 134]{CMS-GS-2015}:
	\begin{equation}\label{eqn:KC_reg}
		e_{2n-1}\colon K_{2n-1}(\Cbb)\to \Rbb(n-1).
	\end{equation}
	The regulator map $R^n_\Fbb$ in \eqref{eqn:Borel_reg} is defined to be the composition $(e_{2n-1})_*\circ \lambda_{2n-1}$.
	
	When $n>1$, the image of the regulator  map is a lattice in the real vector space $Y_{n-1}(\Fbb)\otimes_\Q\Rbb$. In the $n=1$ case, the image of the regulator map is a lattice in a hyperplane $\overline{Y_0(\Fbb)}\otimes_\Q\Rbb\subseteq Y_0(\Fbb)\otimes_\Q\Rbb$. This hyperplane can be viewed as the kernel of a $G$-equivariant map 
	\begin{equation}\label{eqn:hyperplane}
		f\colon Y_0(\Fbb)\otimes_\Q\Rbb \longrightarrow \Rbb_\triv,
	\end{equation}	
	where $\Rbb_\triv$ denotes $\Rbb$ with the trivial $G$-action.
	\begin{thm}[{\cite[Theorems 2.4 and 2.5]{Gross_Artin_L}}] \label{thm:QB_gal}
		The Dirichlet and Borel regulator maps are Galois equivariant, inducing isomorphisms of rational $G$-representations:
		\begin{equation*}
			K_{m}(\Ocal_\Fbb)\otimes \Q\cong\left\{\begin{array}{cl}
				\Q_\triv,& m=0;\\
				\overline{Y_0(\Fbb)}, & m=1;\\
				Y_{n-1}(\Fbb), & m=2n-1>1;\\
				0, & \text{else}.
			\end{array}\right.
		\end{equation*}
	\end{thm}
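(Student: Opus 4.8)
\emph{Strategy.} Since \Cref{thm:QB} already determines the underlying abelian groups --- in particular the $\Q$-dimensions --- the only new content is that the stated identifications are $G$-equivariant. The plan has two halves. First, I would make the Dirichlet--Borel regulator \eqref{eqn:Borel_reg} into a $G$-equivariant map and extend scalars, turning the Borel (resp.\ Dirichlet) description of its image in \Cref{thm:QB} into an isomorphism of $\Rbb[G]$-modules $K_{2n-1}(\Ocal_\Fbb)\otimes_\Q\Rbb\cong Y_{n-1}(\Fbb)\otimes_\Q\Rbb$ for $n>1$, and $K_1(\Ocal_\Fbb)\otimes_\Q\Rbb\cong\overline{Y_0(\Fbb)}\otimes_\Q\Rbb$ for $n=1$. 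Second, I would descend such isomorphisms from $\Rbb$ to $\Q$. The degenerate cases are free: $K_{2n}(\Ocal_\Fbb)$ is finite for $n\ge1$ and $K_m(\Ocal_\Fbb)=0$ for $m<0$ since $\Ocal_\Fbb$ is regular, so both rationalize to $0$; and $K_0(\Ocal_\Fbb)$ has free rank one generated by $[\Ocal_\Fbb]$ with finite torsion $\mathrm{Pic}(\Ocal_\Fbb)$, and $[\Ocal_\Fbb]$ is fixed by every ring automorphism of $\Ocal_\Fbb$, so $K_0(\Ocal_\Fbb)\otimes\Q\cong\Q_\triv$.

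\emph{Equivariance of the regulator.} I would unwind the factorization $R^n_\Fbb=(e_{2n-1})_*\circ\lambda_{2n-1}$ of \eqref{eqn:assemble_emb}--\eqref{eqn:KC_reg}. The group $G$ acts on $\Ocal_\Fbb$ by ring automorphisms, hence on $K_{2n-1}(\Ocal_\Fbb)$ by functoriality, and on $\emb(\Fbb)$ as recalled in \Cref{sec:background}; the assembly map $x\mapsto(\iota\mapsto\iota_*x)$ intertwines these because functoriality of $K$-theory gives $\iota_*\circ g_*=(\iota\circ g)_*$, so that $\lambda_{2n-1}$ is $G$-equivariant once the left/right convention on $\emb(\Fbb)$ is matched to the functorial action, and it is manifestly compatible with the complex-conjugation $C_2$-actions. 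Postcomposition with the universal Borel regulator $e_{2n-1}\colon K_{2n-1}(\Cbb)\to\Rbb(n-1)$ only alters the value in $K_{2n-1}(\Cbb)$, on which $G$ acts trivially, so $(e_{2n-1})_*$ is $G$-equivariant; being $C_2$-equivariant, $e_{2n-1}$ carries $C_2$-fixed functions to $C_2$-fixed functions. Finally I would record the $G$-equivariant identification $\map^{C_2}(\emb(\Fbb),\Rbb(m))=\{v\in\Rbb^{\emb(\Fbb)}\mid c\cdot v=(-1)^mv\}=Y_m(\Fbb)\otimes_\Q\Rbb$, a $C_2$-map into $\Rbb(m)$ being exactly a function $\varphi$ with $\varphi(c\cdot v)=(-1)^m\varphi(v)$, and the $G$-action on $\emb(\Fbb)$ commuting with the $C_2$-action. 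Hence $R^n_\Fbb$ is $G$-equivariant, and its scalar extension $R^n_\Fbb\otimes\Rbb$ is a map of $\Rbb[G]$-modules. By the Borel part of \Cref{thm:QB} its image is a full lattice in the target when $n>1$, so $R^n_\Fbb\otimes\Rbb$ is an isomorphism; for $n=1$, Dirichlet's unit theorem identifies the image with a full lattice in the kernel of the summation map $f$ of \eqref{eqn:hyperplane}, which is $G$-equivariant because $G$ permutes the coordinates indexed by $\emb(\Fbb)$ (real and complex places separately), so $R^1_\Fbb\otimes\Rbb$ is an isomorphism onto $\overline{Y_0(\Fbb)}\otimes_\Q\Rbb$.

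\emph{Descent to $\Q$.} A finitely generated module over $\Q[G]$ with $G$ finite is determined up to isomorphism by its character $g\mapsto\mathrm{tr}(g\mid-)$, which is $\Q$-valued and unchanged by extension of scalars to $\Rbb$; equivalently one invokes the Noether--Deuring theorem. Since $K_{2n-1}(\Ocal_\Fbb)\otimes\Q$ and $Y_{n-1}(\Fbb)$ (resp.\ $\overline{Y_0(\Fbb)}$ when $n=1$) are $\Q[G]$-modules whose scalar extensions to $\Rbb$ are isomorphic by the previous paragraph, they are isomorphic as $\Q[G]$-modules, which is the assertion.

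\emph{Main obstacle.} The genuinely delicate point is the equivariance step: one must track the $G$- and $C_2$-equivariance of both the universal Borel regulator $e_{2n-1}$ and the assembly map $\lambda_{2n-1}$ at the same time, and in particular fix the left/right action conventions on $\emb(\Fbb)$ so that they are compatible with the functorial $G$-action on $K_*(\Ocal_\Fbb)$. The descent step is short but essential: $R^n_\Fbb$ need not land in the $\Q$-form $Y_{n-1}(\Fbb)$, so the rational isomorphism is extracted from character theory rather than by restricting the regulator to a $\Q$-lattice.
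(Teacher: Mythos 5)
The paper does not supply its own proof of this statement but cites Gross's Theorems 2.4 and 2.5. Your reconstruction --- Galois-equivariance of $R^n_\Fbb$ via functoriality of $K$-theory and of the universal Borel regulator, the Borel/Dirichlet full-lattice result upgrading this to an $\Rbb[G]$-isomorphism onto $Y_{n-1}(\Fbb)\otimes_\Q\Rbb$ (resp. $\overline{Y_0(\Fbb)}\otimes_\Q\Rbb$), and Noether--Deuring descent from $\Rbb$ to $\Q$, together with the direct treatment of $m=0$ and the degenerate cases --- is precisely the argument in \cite{Gross_Artin_L}, so the approaches coincide.
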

	Using \Cref{thm:QB_gal} and Frobenius reciprocity, Gross proved:
	\begin{thm}[Gross, {\cite[Equation 3.9]{Gross_Artin_L}}]\label{thm:alg_QB_AL}
		Consider the Artin $L$-function $L(s,\rho)$ attached to a Galois representation $\rho\colon G=\gal(\Fbb/\Kbb)\to \aut_\Lbb(V)$ as in \Cref{defn:artin_L}. Denote the $\Lbb$-vector space $V$ with the associated $G$-action by $\underline{\rho}$. Then 
		\begin{equation*}
			\ord_{s=1-n}L(s,\rho)=\dim_\Lbb\left[K_{2n-1}(\Ocal_\Fbb)\otimes_\Z \underline{\rho}\right]^G.
		\end{equation*}
	\end{thm}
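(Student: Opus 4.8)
The plan is to compare both sides of the claimed identity with a common archimedean-local invariant of $\rho$. On the $K$-theory side, I would begin with \Cref{thm:QB_gal}: it identifies $K_{2n-1}(\Ocal_\Fbb)\otimes\Q$ with the rational $G$-representation $Y_{n-1}(\Fbb)$ for $n>1$ and with its codimension-one subrepresentation $\overline{Y_0(\Fbb)}\subseteq Y_0(\Fbb)$ for $n=1$. Since $\Q[G]$ is semisimple and $\underline\rho$ is defined over a field containing $\Q$, the functor $W\mapsto(W\otimes_\Q\underline\rho)^G$ is exact and commutes with passage to direct summands, so the short exact sequence $0\to\overline{Y_0(\Fbb)}\to Y_0(\Fbb)\to\Q_\triv\to 0$ reduces the $n=1$ case to the $Y_0$ case at the cost of the correction term $\dim_\Lbb\underline\rho^G$. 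Thus the whole $K$-theoretic side is governed by the numbers $\dim_\Lbb\bigl[Y_{n-1}(\Fbb)\otimes_\Q\underline\rho\bigr]^G$.

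To compute these, I would decompose the $G$-permutation module $\Q^{\emb(\Fbb)}$ according to the archimedean places of $\Kbb$: because $\Fbb/\Kbb$ is $G$-Galois, the embeddings of $\Fbb$ extending a fixed embedding of $\Kbb$ form a $G$-torsor, so the summand attached to a real place of $\Kbb$ is (isomorphic to) the regular representation $\Q[G]$ on which complex conjugation $c$ acts through a local element $c_v\in G$ (the ``Frobenius at $v$''), while the summand attached to a complex place is $\Q[G]^{\oplus2}$ with $c$ interchanging the two factors. Taking the $(-1)^{n-1}$-eigenspace of $c$, tensoring with $\underline\rho$, and applying $(-)^G$ --- using the standard isomorphism $(\Q[G]\otimes_\Q\underline\rho)^G\cong V$ under which left translation by $c_v$ corresponds to $\rho(c_v)$ --- yields
\[
\dim_\Lbb\bigl[Y_{n-1}(\Fbb)\otimes_\Q\underline\rho\bigr]^G=\sum_{v\mid\infty\text{ complex}}\dim_\Lbb V\;+\sum_{v\mid\infty\text{ real}}\dim_\Lbb V^{\,\rho(c_v)=(-1)^{n-1}},
\]
the sums running over archimedean places of $\Kbb$. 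Matching the residual $C_2$-action with the operators $\rho(c_v)$ after passing to $G$-invariants, and getting the real-place eigenspaces right in each parity of $n$, is the step I expect to demand the most care.

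Finally I would read off the order of vanishing analytically. The archimedean Euler factor of $L(s,\rho)$ is
\[
L_\infty(s,\rho)=\prod_{v\mid\infty\text{ complex}}\Gamma_\Cbb(s)^{\dim_\Lbb V}\cdot\prod_{v\mid\infty\text{ real}}\Gamma_\Rbb(s)^{\dim_\Lbb V^{\rho(c_v)=1}}\,\Gamma_\Rbb(s+1)^{\dim_\Lbb V^{\rho(c_v)=-1}},
\]
and the completed function $\Lambda(s,\rho)=L_\infty(s,\rho)\,L(s,\rho)$ satisfies a functional equation relating $s$ to $1-s$. Counting the poles of $\Gamma_\Rbb$ and $\Gamma_\Cbb$ at $s=1-n$ gives $\ord_{s=1-n}L_\infty(s,\rho)=-\dim_\Lbb\bigl[Y_{n-1}(\Fbb)\otimes_\Q\underline\rho\bigr]^G$ by the previous display. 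For $n\ge2$ the functional equation identifies $\Lambda(1-n,\rho)$ with a nonzero multiple of $\Lambda(n,\overline\rho)=L_\infty(n,\overline\rho)\,L(n,\overline\rho)\ne0$ --- the Euler product for $L(n,\overline\rho)$ converges absolutely and without zeros for $\Re s=n>1$, and $L_\infty(n,\overline\rho)$ is finite and nonzero --- so $\ord_{s=1-n}\Lambda(s,\rho)=0$ and hence $\ord_{s=1-n}L(s,\rho)=\dim_\Lbb\bigl[K_{2n-1}(\Ocal_\Fbb)\otimes_\Z\underline\rho\bigr]^G$. For $n=1$ the factor $\zeta_\Kbb(s)^{\dim_\Lbb V^G}$ inside $L(s,\rho)$ makes $\Lambda(s,\rho)$ have a pole of order exactly $\dim_\Lbb V^G$ at $s=0$, which gives $\ord_{s=0}L(s,\rho)=\dim_\Lbb\bigl[Y_0(\Fbb)\otimes_\Q\underline\rho\bigr]^G-\dim_\Lbb\underline\rho^G=\dim_\Lbb\bigl[\overline{Y_0(\Fbb)}\otimes_\Q\underline\rho\bigr]^G$, as required by the first paragraph. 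An alternative packaging of the representation theory is to invoke Brauer's induction theorem, the inductivity $L(s,\mathrm{Ind}_H^G\psi)=L(s,\psi)$ of Artin $L$-functions, and Frobenius reciprocity $\bigl[K_{2n-1}(\Ocal_\Fbb)\otimes\mathrm{Ind}_H^G\psi\bigr]^G\cong\bigl[K_{2n-1}(\Ocal_\Fbb)\otimes\psi\bigr]^H$ to reduce to one-dimensional $\psi$, where $L(s,\psi)$ is a Hecke $L$-function; but the archimedean input above is needed in any formulation.
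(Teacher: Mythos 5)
The paper itself gives no proof of this statement---it is quoted from Gross with only the one-line indication ``Using \Cref{thm:QB_gal} and Frobenius reciprocity.'' Your reconstruction is correct and is essentially Gross's argument: identify the $K$-theoretic side with $\dim_\Lbb[Y_{n-1}(\Fbb)\otimes_\Q\underline\rho]^G$ via \Cref{thm:QB_gal}, decompose $\Q^{\emb(\Fbb)}$ over the archimedean places of $\Kbb$ (Frobenius reciprocity giving the contribution $\dim_\Lbb V^{\rho(c_v)=(-1)^{n-1}}$ at each real place and $\dim_\Lbb V$ at each complex place), and match this count with the order of the pole of the $\Gamma$-factor $L_\infty(s,\rho)$ at $s=1-n$, using the functional equation to pass from $1-n$ to $n$. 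If you were to write this out in full, the one place deserving an explicit word is the $n=1$ case: the assertion that $\Lambda(s,\rho)$ has a pole of order \emph{exactly} $\dim_\Lbb V^G$ at $s=0$ uses, beyond Brauer induction, the non-vanishing $L(1,\chi)\neq 0$ of Hecke $L$-functions at $s=1$ so that the trivial-free part $\Lambda(s,\rho')$ is regular and nonzero there; you invoke this implicitly but it is the one genuinely analytic input that goes beyond absolute convergence of the Euler product.
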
	
	\section{Rational equivariant algebraic $K$-theory spectra of number fields}\label{sec:rational_alg_K}
	\Cref{thm:alg_QB_AL} is an algebraic Borel's rank theorem for Artin $L$-functions. To prove \Cref{thm:main}, we need to identify the \emph{$G$-fixed point subspaces} in \Cref{thm:alg_QB_AL} with the \emph{$G$-equivariant homotopy groups} in \Cref{thm:main}. Algebraic $K$-groups of a commutative ring $R$ are homotopy groups of the algebraic $K$-theory \emph{spectrum} $K(R)$ of $R$. For a commutative ring $R$ with a $G$-action, its algebraic $K$-theory spectrum $K(R)$ has been constructed a \emph{genuine} $G$-spectrum by Merling and Barwick-Glasman-Shah in \cite{Merling_equivariant_K,Barwick_spectral_Mackey_I, BGS_spectral_Mackey_II}. For simplicity, we will consider $K(R)$ only as a \emph{na\"ive} $G$-spectrum, i.e. a spectrum with a $G$-action, in this paper. The first step to prove \Cref{thm:main} is to rationalize the algebraic $K$-theory spectra of number fields. As mentioned in \Cref{rem:htpy_coeff}, rational algerbaic $K$-groups of a ring $R$ are defined to be homotopy groups of $K(R)\wedge M(\Q)$, where $M(\Q)$ is the Moore spectrum for the rational numbers. 
	\begin{defn}\label{defn:Moore}
		Let $A$ be an abelian group. The \textbf{Moore spectrum} of $A$ is the connective spectrum $M(A)$ such that
		\begin{equation*}
			H_*(M(A);\Z)=\left\{\begin{array}{cl}
				A, & *=0;\\
				0,& \text{else.}
			\end{array}\right.
		\end{equation*}  
	\end{defn}
	\begin{rem}
		Here are some basic facts about Moore spectra:
		\begin{enumerate}
			\item While the definition above uniquely determines the spectrum $M(A)$ up to weak equivalences, the assignment $A\mapsto M(A)$ is \emph{not} functorial. 
			\item From the additivity axiom of singular homology, we have $M(A\oplus B)\simeq M(A)\vee M(B)$ for any abelian groups $A$ and $B$.
			\item The Moore spectrum construction is \emph{not} monoidal in general. Using the K\"unneth Theorem for singular homology groups, one can check that $M(\Z/2)\wedge M(\Z/2)\not\simeq M(\Z/2),$ even though $\Z/2\otimes_\Z \Z/2\cong \Z/2$.
		\end{enumerate}
	\end{rem}
	\begin{prop}\label{prop:rational_ME}
		Let $A$ be a $\Q$-vector space. Denote its Eilenberg-MacLane spectrum by $HA$. Then the map $MA\to HA$ classifying $\mathrm{id}_A\in \hom(A,A)\cong [M(A),HA]$ is an equivalence. Consequently, for any field $\Lbb$ of characteristic zero, its Moore spectrum $M(\Lbb)\simeq H\Lbb$ is an $E_\infty$-ring spectrum. In addition, the Moore spectrum $M(V)$ for any $\Lbb$-vector space $V$ has a natural $H\Lbb$-module spectrum structure. 
	\end{prop}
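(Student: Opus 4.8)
The plan is to reduce everything to the first assertion, that the canonical map $f\colon M(A)\to HA$ classifying $\mathrm{id}_A$ is an equivalence when $A$ is a $\Q$-vector space; once that is known, the $E_\infty$-structure on $M(\Lbb)$ and the $H\Lbb$-module structure on $M(V)$ come for free by transporting the evident structures on $H\Lbb$ and $HV$ along this equivalence. The one genuinely nontrivial point is that $M(A)$ has no homotopy above degree $0$, and that is the step I expect to take the most work.

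First I would record that $f$ induces the identity on $\pi_0$: under the isomorphism $[M(A),HA]\cong H^0(M(A);A)\cong\hom(H_0(M(A);\Z),A)=\hom(A,A)$ — in which the potential $\mathrm{Ext}^1$-term vanishes since $H_{-1}(M(A);\Z)=0$ — the map $f$ is by construction the one inducing $\mathrm{id}_A$ on $H_0(-;\Z)$, hence, via the Hurewicz isomorphism $\pi_0 M(A)\cong H_0(M(A);\Z)$ for the connective spectrum $M(A)$, also the identity on $\pi_0$. It therefore suffices to show that $M(A)$ is an Eilenberg--MacLane spectrum concentrated in degree $0$.

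To see this I would first argue that $M(A)$ is a rational spectrum. For $n\ge 1$, the cofiber of multiplication by $n$ on $M(A)$ is $M(A)\wedge M(\Z/n)$, whose integral homology is $H_*(M(A);\Z/n)$; by the universal coefficient theorem this vanishes because $A$, being a $\Q$-vector space, is divisible (so $A\otimes\Z/n=0$) and torsion-free (so $\mathrm{Tor}(A,\Z/n)=0$). A connective spectrum with vanishing integral homology is contractible by Hurewicz, so multiplication by $n$ is an equivalence on $M(A)$ for every $n$, i.e.\ each $\pi_k M(A)$ is a $\Q$-vector space. Now for any rational spectrum $X$ one has $H_*(X;\Z)\cong H_*(X;\Q)\cong\pi_*(X)$ — the last isomorphism being the rational Hurewicz theorem for spectra, and the first holding because $H_*(X;\Z)$ is again uniquely divisible (multiplication by $n$ being an equivalence on $X\wedge H\Z$ as well) — so $\pi_* M(A)\cong H_*(M(A);\Z)$ is concentrated in degree $0$ with value $A$. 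Thus $M(A)$ is an Eilenberg--MacLane spectrum and $f$, being the identity on $\pi_0$, is an equivalence. (Alternatively, one may pick a basis to write $A=\bigoplus_{i\in I}\Q$ and use that $M(\Q)=\operatorname{colim}\bigl(\mathbb{S}\xrightarrow{2}\mathbb{S}\xrightarrow{3}\cdots\bigr)\simeq\mathbb{S}_\Q\simeq H\Q$ by Serre's finiteness theorem, together with the fact that both $M(-)$ and $H(-)$ send direct sums to wedges; the resulting equivalence $M(A)\simeq HA$ again induces $\mathrm{id}_A$ on $H_0(-;\Z)$, so it agrees with $f$.)

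For the remaining two claims, note that a characteristic-zero field $\Lbb$ is in particular a $\Q$-vector space, so $M(\Lbb)\simeq H\Lbb$ by the first part; since the Eilenberg--MacLane functor is lax symmetric monoidal, it carries the commutative ring $\Lbb$ to the $E_\infty$-ring spectrum $H\Lbb$ and each $\Lbb$-module $V$ to an $H\Lbb$-module $HV$, and transporting these structures along the equivalences $M(\Lbb)\simeq H\Lbb$ and $M(V)\simeq HV$ (an equivalence of spectra by the first part, applied to the underlying $\Q$-vector space of $V$) equips $M(\Lbb)$ with an $E_\infty$-ring structure and $M(V)$ with an $H\Lbb$-module structure. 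The only caveat — relevant because $A\mapsto M(A)$ is not a functor — is that these structures depend on the chosen equivalence; I would point out that the equivalence of the first part is canonical once one fixes the class $\mathrm{id}_A$, which is all that is needed in the sequel.
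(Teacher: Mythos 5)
Your proposal is correct, but the main argument you give is genuinely different from the paper's. The paper's proof goes directly through the universal coefficient theorem for stable homotopy groups: flatness of $A$ gives $\pi_*(M(A))\cong\pi_*(S^0)\otimes_\Z A$, and Serre's finiteness theorem then kills everything in positive degrees since $A$ is a $\Q$-vector space. Your main argument is instead a rationality argument: you show that multiplication by $n$ is an equivalence on $M(A)$ by checking that its cofiber $M(A)\wedge M(\Z/n)$ has vanishing integral homology (using divisibility and torsion-freeness of $A$), and then invoke the rational Hurewicz theorem to identify $\pi_*M(A)$ with $H_*(M(A);\Z)$. Both routes are valid and both ultimately lean on Serre's theorem, but yours reaches it indirectly through rational Hurewicz rather than through the UCT for homotopy with coefficients; the alternative you sketch in parentheses, writing $A=\bigoplus_I\Q$ and using $M(\Q)\simeq\mathbb{S}_\Q\simeq H\Q$, is essentially the paper's computation specialized along a choice of basis. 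For the second and third assertions your treatment matches the paper's one-line appeal to the lax symmetric monoidal structure of $H$. You also add a useful point the paper leaves implicit, namely verifying that the specific map classifying $\mathrm{id}_A$ is the one that is an equivalence (rather than just asserting abstractly that the two spectra are equivalent), and you flag the non-functoriality caveat explicitly; both of these are welcome additions even if not strictly required.
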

	\begin{proof}
		This is a standard result in rational homotopy theory, which essentially follows from a result of Serre (see below). We first show $M(A)\simeq HA$ for a $\Q$-vector space $A$. The universal coefficient  theorem for stable homotopy groups and the flatness of $A$ as a $\Z$-module implies:
		\begin{equation*}
			\pi_*(M(A))\cong \pi_*(S^0)\otimes_\Z A.
		\end{equation*}
		It now follows from Serre's theorem on the finiteness of stable homotopy groups of spheres that 
		\begin{equation*}
			\pi_*(S^0)\otimes_\Z A\cong \left\{\begin{array}{cl}
				A, & *=0;\\
				0, &\text{else.}
			\end{array}\right.
		\end{equation*}
		The second part of the claim follows by applying the lax-monoidal Eilenberg-MacLane functor $H$ to the structure map $\Lbb\otimes_\Z V\to V$ of $V$ as an $\Lbb$-vector space.
	\end{proof}
	\begin{lem}\label{lem:rational_FPT}
		Let $G$ be a finite group and $X$ be a na\"ive $G$-spectrum with rational homotopy groups. Then we have an isomorphism of abelian groups. 
		\begin{equation*}
			\pi_*\left(X^{hG}\right)\cong \left[\pi_*(X)\right]^G.
		\end{equation*}
	\end{lem}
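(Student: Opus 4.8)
The plan is to compute $\pi_*(X^{hG})$ via the homotopy fixed point spectral sequence. Writing $X^{hG}=F(EG_+,X)^G$ and filtering by the skeleta of a model of $EG$ yields a conditionally convergent spectral sequence
\[
E_2^{s,t}=H^s\!\left(G;\pi_t X\right)\;\Longrightarrow\;\pi_{t-s}\!\left(X^{hG}\right),
\]
with the usual $\Z\times\Z$-indexing. The first observation is that the hypothesis on $X$ makes each $\pi_t X$ a module over the group algebra $\Q[G]$: it is a $\Q$-vector space equipped with a $G$-action.

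Next I would invoke the standard vanishing of higher group cohomology with coefficients in a $\Q[G]$-module. For any $G$-module $M$, the groups $H^s(G;M)$ with $s>0$ are annihilated by $|G|$ (by the transfer relation $\mathrm{cor}\circ\mathrm{res}=|G|$). When $M$ is a $\Q$-vector space, $|G|$ acts invertibly, so $H^s(G;M)=0$ for $s>0$, while $H^0(G;M)=M^G$; equivalently, $\Q[G]$ is semisimple by Maschke's theorem. Applying this to $M=\pi_t X$ shows the $E_2$-page is concentrated in the single column $s=0$, where $E_2^{0,t}=(\pi_t X)^G$.

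It then remains only to read off the abutment. Since a single column is nonzero, no differential can be nontrivial, so the spectral sequence degenerates at $E_2$; the filtration of each $\pi_n(X^{hG})$ has a single nontrivial subquotient, so there is no extension problem; and the derived $E_\infty$-term vanishes for a one-column spectral sequence, so conditional convergence upgrades to strong convergence automatically. Hence $\pi_n(X^{hG})\cong E_\infty^{0,n}=(\pi_n X)^G$ for every $n$. The only point that genuinely needs attention is the convergence of the homotopy fixed point spectral sequence for a na\"ive $G$-spectrum, since $EG$ is infinite-dimensional and one has only conditional convergence a priori; but this is exactly the mild issue that collapse onto one column dispatches. (One could equally argue that a spectrum with rational homotopy groups is $H\Q$-local, split it $G$-equivariantly into Eilenberg--MacLane pieces in the Borel category, and apply $(H M)^{hG}\simeq H(M^G)$ to each piece---which again reduces to the same cohomology vanishing---but the spectral sequence argument is the most transparent.)
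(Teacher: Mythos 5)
Your argument is correct and is essentially the same as the paper's: both run the homotopy fixed point spectral sequence $E_2^{s,t}=H^s(G;\pi_t X)\Rightarrow\pi_{t-s}(X^{hG})$ and use the vanishing of positive-degree rational group cohomology of a finite group (Maschke/transfer) to collapse the $E_2$-page onto the $s=0$ column. Your added remarks on convergence of the Borel spectral sequence and the alternative Eilenberg--MacLane splitting are sound but not needed beyond what the paper records.
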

	\begin{proof}
		This follows from the fact that the group cohomology of a finite group $G$ with rational coefficients vanishes in positive degrees. As a result, the homotopy fixed point spectral sequence
		\begin{equation*}
			E_2^{s,t}=H^s(G;\pi_t(X))\Longrightarrow \pi_{t-s}(X^{hG})
		\end{equation*} 
		is concentrated in the $(s=0)$-line, yielding an isomorphism  $\pi_*(X^{hG})\cong [\pi_*(X)]^G$.
	\end{proof}
	\Cref{thm:QB_gal} can now be translated into a description of the rational $G$-equivariant homotopy type of the algebraic $K$-theory spectrum $K(\Ocal_{\Fbb})$. First, we lift the Dirichlet/Borel regulator maps on algebraic $K$-groups described in \Cref{sec:background} to an equivariant map between na\"ive rational $G$-spectra.
	\begin{thm}\label{thm:spectral_Borel_reg}
		Let $\Fbb$ be a number field and $G=\gal(\Fbb/\Q)$ be Galois group of $\Fbb$ over $\Q$, i.e. the group of field automorphisms of $\Fbb$ over $\Q$. Denote by $kr$ the connective cover of Atiyah's $C_2$-equivariant Real $K$-theory spectrum.  For a topological space $X$, denote its suspension spectrum by $\Sigma_+^{\infty}X$. Then there is a $G$-equivariant map of spectra 
		\begin{equation}
			\begin{tikzcd}
				R_\Fbb\colon K(\Ocal_{\Fbb})\rar["\lambda"]&\map\left(\Sigma_+^{\infty}\emb(\Fbb), K(\Cbb)\right)^{hC_2}\rar["e_*"] & \map\left(\Sigma_+^{\infty}\emb(\Fbb), \Sigma kr\wedge H\Rbb\right)^{hC_2},
			\end{tikzcd}
		\end{equation}
		whose induced map on $\pi_{2n-1}$ is the Dirichlet/Borel regulator map $R^n_{\Fbb}$. 
	\end{thm}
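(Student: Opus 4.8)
The plan is to realize $R_\Fbb$ as the composite of two $G$-equivariant maps of na\"ive $G$-spectra, mirroring the two maps $\lambda_{2n-1}$ and $(e_{2n-1})_*$ out of which the regulator $R^n_\Fbb$ of \eqref{eqn:Borel_reg} is assembled, and then to read off the effect on $\pi_{2n-1}$ one factor at a time.

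For the first arrow I would use functoriality of algebraic $K$-theory. Each embedding $\iota\colon\Fbb\hookrightarrow\Cbb$ restricts to a ring homomorphism $\Ocal_\Fbb\to\Cbb$, hence to a map $K(\Ocal_\Fbb)\to K(\Cbb)$; since algebraic $K$-theory of a $G$-ring is a na\"ive $G$-spectrum and carries finite products of rings to products of spectra, these assemble into
\begin{equation*}
	K(\Ocal_\Fbb)\longrightarrow K\left(\prod_{\iota\in\emb(\Fbb)}\Cbb\right)=\map\left(\emb(\Fbb)_+,K(\Cbb)\right),
\end{equation*}
equivariant for the $C_2\times G$-action in which $C_2$ acts on the set $\emb(\Fbb)$ by complex conjugation and on $K(\Cbb)$ by the induced conjugation, while $G$ acts on $\emb(\Fbb)$ by precomposition and trivially on $K(\Cbb)$; the $C_2$-action on $\Ocal_\Fbb$ is trivial. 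Because the ring homomorphism $\Ocal_\Fbb\to\prod_\iota\Cbb$ has $C_2$-fixed image, applying $(-)^{hC_2}$ and precomposing with the canonical map from $K(\Ocal_\Fbb)$ (with trivial $C_2$-action) to its homotopy fixed points yields the first arrow $\lambda\colon K(\Ocal_\Fbb)\to\map(\emb(\Fbb)_+,K(\Cbb))^{hC_2}$, a map of na\"ive $G$-spectra; composing it on $\pi_{2n-1}$ with the edge homomorphism $\pi_{2n-1}(\map(\emb(\Fbb)_+,K(\Cbb))^{hC_2})\to\map^{C_2}(\emb(\Fbb),K_{2n-1}(\Cbb))$ of the homotopy fixed point spectral sequence recovers the assembled embedding map $\lambda_{2n-1}$ of \eqref{eqn:assemble_emb}.

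The substance of the proof is the second arrow: a $C_2$-equivariant map $e\colon K(\Cbb)\to\Sigma kr\wedge S^0_\Rbb$ realizing the universal Borel regulators $e_{2n-1}$ of \eqref{eqn:KC_reg} on homotopy groups. By \Cref{prop:rational_ME} we have $S^0_\Rbb\simeq H\Rbb$, so $\Sigma kr\wedge S^0_\Rbb$ is a rational na\"ive $C_2$-spectrum; writing $ku$ for the underlying spectrum of $kr$, its homotopy groups are $\pi_{2n-1}(\Sigma kr\wedge S^0_\Rbb)\cong\pi_{2n-2}(ku)\otimes\Rbb=\Rbb\langle\beta^{n-1}\rangle\cong\Rbb(n-1)$ for $n\ge 1$ — the twist forced by the classical fact that complex conjugation sends the Bott class $\beta$ to $-\beta$ — and $0$ otherwise. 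Since $H^{>0}(C_2;-)$ vanishes on $\Q$-vector spaces, its na\"ive $C_2$-equivariant Postnikov tower splits, so
\begin{equation*}
	\Sigma kr\wedge S^0_\Rbb\;\simeq\;\prod_{n\ge 1}\Sigma^{2n-1}H\Rbb(n-1)
\end{equation*}
as na\"ive $C_2$-spectra, and a $C_2$-map into it is exactly a collection, one for each $n\ge 1$, of classes in $H^{2n-1}(K(\Cbb);\Rbb)$ on which complex conjugation acts by $(-1)^{n-1}$. I would take the class coming from the universal Borel regulator: the Borel class $b_{2n-1}\in H^{2n-1}_{\mathrm{ct}}(\GL(\Cbb);\Rbb)$ is primitive, its restriction along $\GL(\Cbb)^\delta\to\GL(\Cbb)$ is a primitive class in $H^{2n-1}(B\GL(\Cbb)^+;\Rbb)$, and — since over $\Rbb$ the cohomology of a connective spectrum is precisely the module of primitives in the cohomology of its infinite loop space $\Omega^\infty K(\Cbb)$, whose basepoint component is $B\GL(\Cbb)^+$ — this deloops to a class in $H^{2n-1}(K(\Cbb);\Rbb)$, i.e.\ to a map $K(\Cbb)\to\Sigma^{2n-1}H\Rbb$; the sign with which complex conjugation acts on $b_{2n-1}$ is exactly the one making this $C_2$-equivariant with values in $\Rbb(n-1)$, as in \cite[page~134]{CMS-GS-2015}. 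By the Milnor--Moore theorem, pairing $b_{2n-1}$ against the primitives of $H_{2n-1}(\GL(\Cbb);\Q)\cong K_{2n-1}(\Cbb)\otimes\Q$ is $e_{2n-1}$, so the resulting $e$ induces $e_{2n-1}$ on $\pi_{2n-1}$.

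Finally I would set $R_\Fbb=\map(\emb(\Fbb)_+,e)^{hC_2}\circ\lambda$, a map of na\"ive $G$-spectra since $\lambda$ is one and the functor $\map(\emb(\Fbb)_+,-)^{hC_2}$ preserves $G$-actions. As $\emb(\Fbb)$ is finite, $\map(\emb(\Fbb)_+,\Sigma kr\wedge S^0_\Rbb)$ is again rational, so by \Cref{lem:rational_FPT} its homotopy fixed points satisfy $\pi_{2n-1}=\map^{C_2}(\emb(\Fbb),\Rbb(n-1))=Y_{n-1}(\Fbb)\otimes_\Q\Rbb$ and the edge homomorphism onto this group is an isomorphism. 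Naturality of the edge homomorphisms for the homotopy fixed point spectral sequences of the source and target of $\map(\emb(\Fbb)_+,e)^{hC_2}$ then identifies the effect of $R_\Fbb$ on $\pi_{2n-1}$ with $(e_{2n-1})_*\circ\lambda_{2n-1}=R^n_\Fbb$. The formal steps here — functoriality of $K$-theory, the splitting of a rational na\"ive $C_2$-spectrum, and \Cref{lem:rational_FPT} — are routine, and I expect the real obstacle to be the construction of $e$ in the previous paragraph: one must verify that the universal Borel classes are genuinely primitive, so that they deloop from the infinite loop space $\Omega^\infty K(\Cbb)$ to the spectrum $K(\Cbb)$, and that their behavior under complex conjugation is precisely the twist $\Rbb(n-1)$ appearing in $\pi_*(\Sigma kr\wedge S^0_\Rbb)$. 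Both are properties of the classical construction of the Borel regulator via continuous cohomology of $\GL(\Cbb)$ and the Milnor--Moore theorem, but keeping the suspension and the sign conventions consistent throughout is where care is needed.
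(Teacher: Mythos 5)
Your proposal is correct and follows essentially the same route as the paper: lift $\lambda_{2n-1}$ by applying $K(-)$ to the ring map $\Ocal_\Fbb\to\map(\emb(\Fbb),\Cbb)$ and using that $K$-theory commutes with finite products, then use rationality of $\Sigma kr\wedge S^0_\Rbb$ to split it into a wedge of $\Sigma^{2n-1}H\Rbb(n-1)$'s and identify the map $e$ with the universal Borel classes in $H^{2n-1}_{C_2}(K(\Cbb);\Rbb(n-1))$. The only surface differences are that the paper cites \cite[Theorem A.1]{GM_gen_Tate} for the splitting where you argue directly from the rational Postnikov tower, and you spell out the Milnor--Moore/primitivity discussion that the paper compresses into a citation of \cite[page 134]{CMS-GS-2015}.
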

	\begin{proof}
		Following the description of $G$-equivariant Dirichlet and Borel regulator maps in \Cref{sec:background}, it suffices to lift the maps $\lambda_{2n-1}$ in \eqref{eqn:assemble_emb} and $e_{2n-1}$ in \eqref{eqn:KC_reg} to equivariant maps between na\"ive $G$-spectra.
		
		To lift $\lambda_{2n-1}$, consider the adjoint of the evaluation map:
		\begin{equation*}
			\begin{tikzcd}[row sep=0, /tikz/column 1/.append style={anchor=base east}
				,/tikz/column 2/.append style={anchor=base west}]
				v\colon \Ocal_{\Fbb}\rar[hook] & \map\left(\emb(\Fbb),\Cbb\right),\\
				x\rar[mapsto] &(\iota\mapsto \iota(x)).
			\end{tikzcd}
		\end{equation*}
		This is a $G$-equivariant ring homomorphism, where $G$ acts trivially on $\Cbb$. The homomorphism $v$ induces a map $K(v)$ on algebraic $K$-theory spectra:
		\begin{equation*}
			\begin{tikzcd}
				K(\Ocal_\Fbb)\rar["K(v)"]\dar["\lambda"',dashed]& K\left[\map\left(\emb(\Fbb),\Cbb\right) \right]\dar["\simeq"]\\ \map\left(\Sigma_+^{\infty}\emb(\Fbb),K(\Cbb)\right)^{hC_2}\rar&\map\left(\Sigma_+^{\infty}\emb(\Fbb),K(\Cbb)\right)
			\end{tikzcd}			
		\end{equation*}
		The equivalence on the right exists because $\emb(\Fbb)$ is a finite $G$-set and algebraic $K$-theory commutes with finite products. It is $G$-equivariant since $G$ acts trivially on $\Cbb$. Notice $C_2$ acts trivially on $K(\Ocal_\Fbb)$, the map $K(v)$ factors through the $C_2$-homotopy fixed points of the target. The resulting map $\lambda$ lifts the maps $\lambda_{2n-1}$'s on algebraic $K$-groups. 
		
		It now remains to construct a $C_2$-equivariant map $e\colon K(\Cbb)\to \Sigma kr\wedge H\Rbb$ whose induced maps on homotopy groups are the maps $e_{2n-1}$ in \eqref{eqn:KC_reg}. Recall the generator $c\in C_2$ acts on $\pi_{2k}(kr)$ by multiplication by $(-1)^k$.  By \cite[Theorem A.1]{GM_gen_Tate}, rational $G$-spectra for a finite group $G$ are wedge sums of suspensions of equivariant Eilenberg-MacLane spectra.  This yields a $C_2$-equivalence:
		\begin{equation*}
			\Sigma kr\wedge H\Rbb\simeq \bigvee_{n=1}^\infty \Sigma^{2n-1} H\Rbb(n-1).
		\end{equation*}
		It follows that a map $e\colon K(\Cbb)\to \Sigma kr\wedge H\Rbb$ is determined up to homotopy by the universal Borel regulators $\{e_{2n-1}\}$. The latters are identified with indecomposable $C_2$-equivariant cohomology classes in 
		\begin{equation*}
			H^{2n-1}_{C_2}(K(\Cbb),\Rbb(n-1))\cong [K(\Cbb),\Sigma^{2n-1}H\Rbb(n-1)]_{C_2}.\qedhere
		\end{equation*}
	\end{proof}
	\begin{rem}
		In \cite{BNT_Beilinson_reg}, Bunke-Nikolaus-Tamme constructed a spectral lifting of the \textbf{Beilinson regulators} as a morphism between $E_\infty$-motivic spectra over $\Cbb$
		\begin{equation*}
			R_\mathrm{Beilinson}\colon \mathbf{K}\longrightarrow \mathbf{H},
		\end{equation*}
		where $\mathbf{K}$ and $\mathbf{H}$ represent algebraic $K$-theory and absolute Hodge cohomology of smooth complex algebraic varieties, respectively. Burgos Gil showed in \cite[Theorem 10.9]{BurgosGil_regulators} that the universal Beilinson and Borel regulators (see \eqref{eqn:KC_reg}) are related by:
		\begin{equation*}
			e^\mathrm{Borel}_{2n-1}=2\cdot e^\mathrm{Beilinson}_{2n-1}\in H^{2n-1}_{C_2}(K(\Cbb),\Rbb(n-1)).
		\end{equation*}
	\end{rem}
	Using the spectral equivariant Borel regulator map $R_\Fbb$ above, we can read off the rational (real) $G$-equivariant homotopy type of $K(\Ocal_{\Fbb})$.
	\begin{prop}\label{prop:G-cofib}
		There is a $G$-equivariant cofiber sequence:
		\begin{equation*}
			\begin{tikzcd}[column sep=large]
				K(\Ocal_{\Fbb})\wedge H\Rbb \rar["R_\Fbb\wedge 1"] & \map\left(\Sigma_+^{\infty}\emb(\Fbb), \Sigma kr\wedge H\Rbb\right)^{hC_2}\rar["\phi\vee 0"] & \Sigma H\Rbb\vee \Sigma H\Rbb, 
			\end{tikzcd}
		\end{equation*}
		where 
		\begin{itemize}
			\item $R_\Fbb\wedge 1$ is the  $H\Rbb$-module map adjoint to $R_\Fbb$;
			\item $\pi_1(\phi)$ is the map $f$ in \eqref{eqn:hyperplane};
			\item $G$ acts trivially on the cofiber $\Sigma H\Rbb\vee \Sigma H\Rbb$.
		\end{itemize}
	\end{prop}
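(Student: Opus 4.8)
The plan is to carry out the whole argument in the homotopy category of na\"ive $G$-spectra that are $H\Rbb$-modules. By \cite[Theorem A.1]{GM_gen_Tate}, or equivalently by semisimplicity of $\Rbb[G]$, this category is the graded derived category of $\Rbb[G]$-modules; in particular every object is formal and determined by its graded homotopy as an $\Rbb[G]$-module, and a map out of such an object is detected degreewise by $\Rbb[G]$-module homomorphisms. All three terms of the asserted sequence live there: the first two are $S^0_\Rbb\simeq H\Rbb$-modules, hence so is the cofiber $C$ of $R_\Fbb\wedge 1$.

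First I would identify the homotopy groups of the first two terms as $\Rbb[G]$-modules. The universal coefficient theorem and flatness of $\Rbb$ give $\pi_*(K(\Ocal_\Fbb)\wedge S^0_\Rbb)\cong K_*(\Ocal_\Fbb)\otimes_\Z\Rbb$, which by \Cref{thm:QB_gal} equals $\Rbb_\triv$ in degree $0$, $\overline{Y_0(\Fbb)}\otimes\Rbb$ in degree $1$, $Y_{n-1}(\Fbb)\otimes\Rbb$ in each degree $2n-1>1$, and $0$ otherwise. For the middle term, I would combine the $C_2$-splitting $\Sigma kr\wedge S^0_\Rbb\simeq\bigvee_{n\ge1}\Sigma^{2n-1}H\Rbb(n-1)$ from the proof of \Cref{thm:spectral_Borel_reg} with \Cref{lem:rational_FPT} applied to the finite group $C_2$: this shows the homotopy is concentrated in odd degrees with $\pi_{2n-1}\cong\bigl(\Rbb(n-1)^{\emb(\Fbb)}\bigr)^{C_2}$, and a direct check that the $(-1)^{n-1}$-eigenvector condition for the combined $C_2$-action (sign on the coefficients, conjugation on $\emb(\Fbb)$) reproduces the defining condition of $Y_{n-1}(\Fbb)$ identifies this with $Y_{n-1}(\Fbb)\otimes\Rbb$ as $\Rbb[G]$-modules (the residual $G$-action surviving since it commutes with the $C_2$-action). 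By \Cref{thm:spectral_Borel_reg}, $R_\Fbb\wedge 1$ induces on $\pi_{2n-1}$ the $\Rbb$-linear extension of the regulator $R^n_\Fbb$; by \Cref{thm:QB_gal} this is an isomorphism for $n>1$, is injective with image $\overline{Y_0(\Fbb)}\otimes\Rbb=\ker f$ for $n=1$, and is the zero map $\Rbb_\triv\to 0$ in degree $0$.

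Next I would feed this into the long exact sequence of homotopy groups of $C$. The isomorphisms in degrees $2n-1>1$ force $\pi_*(C)=0$ in those degrees and in the adjacent even degrees; in degree $1$ what remains is the short exact sequence $0\to\mathrm{coker}(R^1_\Fbb\otimes\Rbb)\to\pi_1(C)\to\pi_0(K(\Ocal_\Fbb)\wedge S^0_\Rbb)\to 0$, that is $0\to\Rbb_\triv\to\pi_1(C)\to\Rbb_\triv\to 0$, using $\mathrm{coker}(R^1_\Fbb\otimes\Rbb)\cong\mathrm{im}\,f=\Rbb_\triv$ (since $\ker f=\overline{Y_0(\Fbb)}\otimes\Rbb=\mathrm{im}(R^1_\Fbb\otimes\Rbb)$ and $f$ is surjective onto the hyperplane complement) together with $\pi_0(K(\Ocal_\Fbb)\wedge S^0_\Rbb)=\Rbb_\triv$ and the vanishing of $\pi_0$ of the middle term. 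Being an extension of rational vector spaces it splits, so $\pi_*(C)$ is $\Rbb_\triv^{\oplus 2}$ concentrated in degree $1$, and by formality $C\simeq\Sigma H\Rbb_\triv^{\oplus 2}=S^1_\Rbb\vee S^1_\Rbb$ with trivial $G$-action.

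Finally I would identify the canonical map $\map(\emb(\Fbb)_+,\Sigma kr\wedge S^0_\Rbb)^{hC_2}\to C$ under this splitting. Since the source has homotopy only in odd degrees and $S^1_\Rbb$ only in degree $1$, the vanishing of positive $\mathrm{Ext}$ over the semisimple ring $\Rbb[G]$ shows any map from the source to $S^1_\Rbb$ is determined by its effect on $\pi_1$, with $[\map(\emb(\Fbb)_+,\Sigma kr\wedge S^0_\Rbb)^{hC_2},S^1_\Rbb]_G\cong\hom_{\Rbb[G]}(Y_0(\Fbb)\otimes\Rbb,\Rbb_\triv)$. Reading the two components of $\pi_1$ of the canonical map off the long exact sequence: one is the quotient $Y_0(\Fbb)\otimes\Rbb\to\mathrm{coker}(R^1_\Fbb\otimes\Rbb)$, which after identifying $\mathrm{coker}(R^1_\Fbb\otimes\Rbb)\cong\mathrm{im}\,f$ is exactly $f$; the other is zero, since that summand of $\pi_1(C)$ is reached only through the connecting homomorphism from $\pi_0(K(\Ocal_\Fbb)\wedge S^0_\Rbb)$. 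Hence the cofiber map is $\phi\vee 0$ with $\pi_1(\phi)=f$, as asserted. I expect the only genuinely delicate point to be this degree-$1$ bookkeeping: separating the contribution of the cokernel of the Dirichlet regulator from that of $K_0(\Ocal_\Fbb)$ and confirming the cofiber map splits off as $\phi\vee 0$ rather than mixing the two trivial summands; the rest is a mechanical computation once the $\Rbb[G]$-module identifications are in place.
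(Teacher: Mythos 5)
Your proposal is correct and follows essentially the same route as the paper: compute $\pi_*(R_\Fbb\wedge 1)$ degree by degree via \Cref{thm:QB_gal} and \Cref{thm:spectral_Borel_reg}, isolate the failure of isomorphism in degrees $0$ and $1$, and feed the result into the long exact sequence to identify the cofiber as $S^1_\Rbb\vee S^1_\Rbb$ with trivial $G$-action. You go slightly further than the paper's proof by explicitly separating the two $\Rbb_\triv$-summands of $\pi_1(C)$ (one from $\mathrm{coker}$, one from the connecting homomorphism out of $\pi_0$) and using formality of rational $G$-spectra to confirm the cofiber map realizes $\phi\vee 0$ with $\pi_1(\phi)=f$, a bookkeeping step the paper leaves implicit.
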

	\begin{proof}
		First, notice $H\Rbb$ is an $\einf$-ring spectrum and $C_2$-acts $H\Rbb$-linearly on the mapping spectrum \begin{equation*}
			\map\left(\Sigma_+^{\infty}\emb(\Fbb), \Sigma kr\wedge H\Rbb\right).
		\end{equation*} This implies the fixed point spectrum $ \map\left(\Sigma_+^{\infty}\emb(\Fbb), \Sigma kr\wedge H\Rbb\right)^{hC_2}$ is an $H\Rbb$-module spectrum. For any spectrum $X$ and an $H\Rbb$-module spectrum $Y$, we have an adjunction:
		\begin{equation*}
			\hom_{H\Rbb\text{-}\Mod}(X\wedge H\Rbb, Y)\simto \hom_{\Sp}(X,Y).
		\end{equation*}
		The map $R_\Fbb\wedge 1$ is the left adjoint of $R_\Fbb$ under this adjunction. Next, we compute homotopy groups of its cofiber. By \Cref{thm:QB_gal} and \Cref{thm:spectral_Borel_reg}, $\pi_{k}(R_\Fbb\wedge 1)$ is a $G$-isomorphism except for $k=0,1$. Denote the vector space $\Rbb$ with the trivial $G$-action by $\Rbb_\triv$.
		\begin{itemize}
			\item When $k=0$, the Galois group $G$ acts trivially on $K_0(\Ocal_\Fbb)\otimes_\Z \Rbb\cong \Rbb$. Notice \[\pi_0\left(\map\left(\Sigma_+^{\infty}\emb(\Fbb), \Sigma kr\wedge H\Rbb\right)^{hC_2}\right)=0,\] we have $\ker \pi_0(R_\Fbb\wedge 1)\cong \Rbb_\triv$.
			\item Recall from \eqref{eqn:hyperplane}, the Dirichlet regulator exhibits  $K_1(\Ocal_{\Fbb})\otimes_\Z \Rbb$ as the kernel of a $G$-equivariant map \[f\colon Y_0(\Fbb)\otimes_\Q \Rbb\to \Rbb_\triv.\]
			This implies $\cok \pi_1(R_\Fbb\wedge 1)\cong \Rbb_\triv$.  
		\end{itemize}
		The long exact sequence of homotopy groups associated to a cofiber sequence then yields $G$-isomorphisms:
		\begin{equation*}
			\pi_{*}\left(\mathrm{Cofib}~R_\Fbb\wedge 1\right)\cong \left\{\begin{array}{cl}
				\Rbb^{\oplus 2}_\triv, & *=1;\\
				0, & \text{else}.
			\end{array}\right.
		\end{equation*}
		Consequently, the cofiber of $R_\Fbb\wedge 1$ is $G$-equivalent to $\Sigma H(\Rbb^{\oplus 2})\simeq  \Sigma H\Rbb\vee \Sigma H\Rbb$ with the trivial $G$-action. 
	\end{proof}
	\begin{exmps}\label{exmp:emb_gal_c2}
		When $\Fbb/\Q$ is a $G$-Galois extension of number fields, we can give a more explicit description of the $G$-action on the spectrum $\map\left(\Sigma_+^{\infty}\emb(\Fbb), \Sigma kr\wedge H\Rbb\right)^{hC_2}$ in \Cref{prop:G-cofib}.  Under the Galois assumption, $G$ acts freely and transitively on $\emb(\Fbb)$.  This implies $\emb(\Fbb)\cong G$ as a $G$-set. Moreover, as a Galois extension of $\Q$, the field $\Fbb$ is either totally real or totally complex. 
		\begin{enumerate}
			\item If $\Fbb$ is totally real, the group $C_2$ acts on $\emb(\Fbb)$ trivially. This yields a $G$-equivalence of spectra:
			\begin{equation*}
				\map\left(\Sigma_+^{\infty}\emb(\Fbb), \Sigma kr\wedge H\Rbb\right)^{hC_2}\simeq \map\left(\Sigma_+^\infty G, \Sigma ko\wedge H\Rbb\right),
			\end{equation*}
			where $ko$ is the connective real topological $K$-theory spectrum with trivial $G$-action.
			\item If $\Fbb$ is totally complex, the group $C_2$ acts on $\emb(\Fbb)$ freely by \emph{post}-composing complex conjugation. This $C_2$-action coincides with the action of a subgroup $C_2\le G$ on $\emb(\Fbb)$ by \emph{pre}-composing complex conjugation. We then have a $G$-equivalence:
			\begin{equation*}
				\map\left(\Sigma_+^{\infty}\emb(\Fbb), \Sigma kr\wedge H\Rbb\right)^{hC_2}\simeq \map\left(\Sigma_+^\infty(G/C_2), \Sigma ku\wedge H\Rbb\right), 
			\end{equation*}
			where $ku$ is the connective complex topological $K$-theory spectrum with trivial $G$-action. 
		\end{enumerate}
		Combining \Cref{prop:G-cofib} and \Cref{exmp:emb_gal_c2}, we obtain an explicit description of the rational na\"ive $G$-homotopy type of the algebraic $K$-theory spectrum $K(\Ocal_{\Fbb})$, when $\Fbb/\Q$ is a $G$-Galois extension. 
	\end{exmps}
	Next, we connect rational algebraic $K$-theory spectra of number fields with Artin $L$-functions. To incorporate the Galois representation $\rho$ in the definition of Artin $L$-functions, we need to twist the algebraic $K$-theory spectrum by an equivariant Moore spectrum attached to $\rho$. 
	\begin{prop}[Kahn, {\cite[Corollary E]{Kahn_rational_moore}}]\label{prop:rational_eMoore}
		Let $G$ be a finite group and $V$ be a finite dimensional vector space over a field $\Lbb$ of characteristic $0$. Then any $\Lbb$-linear action on $V$ by $G$ can be uniquely lifted to a $G$-action on the Moore spectrum $M(V)\simeq HV$ by $H\Lbb$-module maps, such that the induced $G$-action on $H_0(M(V);\Z)$ is isomorphic to the prescribed $G$-action on $V$.
	\end{prop}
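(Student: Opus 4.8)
The plan is to reduce the whole statement, via \Cref{prop:rational_ME}, to a formal fact about module spectra over $H\Lbb$. Since $\Lbb$ has characteristic zero, \Cref{prop:rational_ME} identifies $M(V)$ with the Eilenberg--MacLane spectrum $HV$ and equips it with a natural $H\Lbb$-module structure; transporting the desired $G$-action along this equivalence, it suffices to show that every $\Lbb$-linear $G$-action on $V$ lifts, uniquely up to equivalence, to a $G$-action on $HV$ by $H\Lbb$-module maps, compatibly with the natural isomorphism $\pi_0(HV)\cong V$.

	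The first step I would carry out is to compute the self-mapping spectrum $\map_{H\Lbb}(HV,HV)$. Because $\Lbb$ is a field, every $\Lbb$-module is projective, so $\pi_k\map_{H\Lbb}(HV,HV)\cong\mathrm{Ext}^{-k}_\Lbb(V,V)$ vanishes for $k\ne 0$ and equals $\hom_\Lbb(V,V)$ for $k=0$. Thus $\map_{H\Lbb}(HV,HV)$ is homotopy discrete, the group $\aut_{H\Lbb}(HV)$ of $H\Lbb$-linear self-equivalences of $HV$ is the discrete group $\GL_\Lbb(V)$, and more conceptually the Eilenberg--MacLane functor $H$ embeds the ordinary category of $\Lbb$-vector spaces as a full subcategory of the $\infty$-category $\Mod_{H\Lbb}$, with the same (discrete) morphism sets.

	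The second step is purely formal. A $G$-action on $HV$ by $H\Lbb$-module maps is a functor $BG\to\Mod_{H\Lbb}$ taking the unique object to $HV$. Post-composition with the fully faithful functor $H$ gives a fully faithful functor $\mathrm{Fun}(BG,\Mod_\Lbb)\to\mathrm{Fun}(BG,\Mod_{H\Lbb})$; since $BG$ has a single object and $HV$ lies in the essential image of $H$, every such functor lies in the essential image of this induced functor, so it restricts to an equivalence between the groupoid of $\Lbb$-linear $G$-representations on $V$ (and $G$-equivariant isomorphisms) and the groupoid of $H\Lbb$-linear $G$-actions on $HV$ (and equivariant equivalences). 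In particular, equivalence classes of $H\Lbb$-linear $G$-actions on $HV$ are in bijection with isomorphism classes of $\Lbb$-linear $G$-representations with underlying space $V$ — equivalently, with $\hom(G,\aut_{H\Lbb}(HV))=\hom(G,\GL_\Lbb(V))$ up to conjugation. Since the functor $\pi_0$ sends an Eilenberg--MacLane $H\Lbb$-module back to its underlying $\Lbb$-module with its $G$-action, the action induced on $H_0(M(V);\Z)\cong V$ by such a lift is the original one, which gives both the existence and the uniqueness clauses.

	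I expect the only step needing input beyond \Cref{prop:rational_ME} to be the computation $\map_{H\Lbb}(HV,HV)\simeq H\hom_\Lbb(V,V)$, i.e.\ the homotopy discreteness of the $H\Lbb$-linear self-maps of $HV$; over a field this is immediate from the vanishing of higher $\mathrm{Ext}$, so there is really no serious obstacle once \Cref{prop:rational_ME} is granted. As an alternative I would run the classical obstruction theory for realizing a group action on a spectrum, whose successive obstructions lie in the cohomology of $G$ with coefficients in $\pi_*\map_{H\Lbb}(HV,HV)$; this is concentrated in degree zero, so all the higher obstructions automatically vanish.
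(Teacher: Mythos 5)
The paper does not give its own proof here: the proposition is quoted verbatim as Kahn's result, and the text moves on to the \texttt{Notation} environment immediately after. So there is no paper-side argument to compare against, only a citation.

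Your proof sketch is correct and self-contained. The key reduction is sound: via \Cref{prop:rational_ME}, $M(V)\simeq HV$ as an $H\Lbb$-module, and since $\Lbb$ is a field the mapping spectrum $\map_{H\Lbb}(HV,HW)$ is discrete with $\pi_0=\hom_\Lbb(V,W)$ (all higher $\mathrm{Ext}$ vanishes, or equivalently $HV$ is a free $H\Lbb$-module). Hence $H\colon \mathrm{Vect}_\Lbb\to \Mod_{H\Lbb}$ is fully faithful, and because $BG$ has a single object whose image lands in the essential image of $H$, the induced functor on $\mathrm{Fun}(BG,-)$ is an equivalence onto the $G$-objects with underlying object $HV$. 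This gives existence and uniqueness of the lift, and compatibility with $H_0(-;\Z)\cong\pi_0$ is automatic. You do not actually use finite-dimensionality of $V$, which is harmless and in fact slightly more general than the stated hypotheses. As you note, Kahn's original argument predates the $\infty$-categorical formalism and is closer in spirit to your suggested alternative, namely an obstruction-theoretic lifting argument with obstructions in $H^{*>0}(G;\pi_*\map_{H\Lbb}(HV,HV))$, which vanish because the coefficient spectrum is discrete; the modern recasting you lead with packages that vanishing into fully faithfulness and delivers existence and uniqueness simultaneously rather than degree by degree.
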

	\begin{notn}
		Denote by $M(\underline{\rho})$ the $G$-equivariant Moore spectrum attached to  $\rho\colon G\to\aut_\Lbb(V)$. 
	\end{notn}
	\begin{rem}
		The assignment $\rho\mapsto M(\underline{\rho})$ is functorial for rational $G$-representations. Let $\Bcal G$ be the groupoid with one object $*$ and morphism set $G$.  Then for any category $\Ccal$, a $G$-action on an object $c\in \Ccal$ can be regarded as a functor $\Bcal G\to \Ccal$ that sends $*$ to $c$. This applies to both $G$-representation/actions on $\Lbb$-vector spaces and on $H\Lbb$-module spectra.  By \Cref{prop:rational_ME}, rational Moore spectra are equivalent to the Eilenberg-MacLane spectra. The latter is a lax-monoidal functor $H\colon \Vect_\Lbb\to \Mod_{H\Lbb}$ that sends direct sums to wedge sums. The rational equivariant Moore spectrum construction is then a post-composition with the functor $H$:
		\begin{align*}
			\Fun(\Bcal G, \Vect_\Lbb)&\longrightarrow \Fun(\Bcal G,\Mod_{H\Lbb}),\\
		\underline{\rho}&\longmapsto M(\underline{\rho})\simeq  H\underline{\rho}.
		\end{align*}
		It follows that for any $G$-representations in $\Lbb$-vector spaces $\underline{\rho}$ and $\underline{\rho'}$, we have  equivalence  of \emph{na\"ive} $G$-equivariant $H\Lbb$-module spectra:
		\begin{equation*}
			M\left(\underline{\rho}\oplus \underline{\rho'}\right)\simeq M\left(\underline{\rho}\right)\vee M\left(\underline{\rho'}\right).
		\end{equation*}
		As the Eilenberg-MacLane spectrum construction is lax-monoidal, we have a natural $G$-equivariant map:
		\begin{equation*}
			 M\left(\underline{\rho}\right)\wedge M\left(\underline{\rho'}\right) \longrightarrow  M\left(\underline{\rho}\otimes \underline{\rho'}\right).
		\end{equation*}
		By the K\"unneth Theorem and the flatness of rational vector spaces as $\Z$-modules, the map above is an equivalence of \emph{na\"ive} rational $G$-spectra. 
			 
		In the \emph{genuine} equivariant setting, Schwede-Shipley showed in  \cite[Example 5.1.2]{Schwede-Shipley_stable_model} that "rational $G$-equivariant stable homotopy category is equivalent to the derived category of rational [$G$-]Mackey functors" for a finite group $G$. This implies if $G_1\le G_2$ are subgroups of $G$, then there are (essentially unique) functorial liftings of the restriction/induction maps between rational $G_1$- and $G_2$-representations to their rational equivariant Moore/Eilenberg-MacLane spectra.  
	\end{rem}

	\begin{lem}\label{lem:eUCT}
		Let $G$ be a finite group and $\rho\colon G\to\aut_\Lbb(V)$ be a $G$-representation over a field $\Lbb$ of characteristic $0$. For any spectrum $X$ with a $G$-action, we have an isomorphism of $G$-representations:
		\begin{equation*}
			\pi_*\left(X\wedge M\left(\underline{\rho}\right)\right)\cong \pi_*(X)\otimes \underline{\rho}.
		\end{equation*}
	\end{lem}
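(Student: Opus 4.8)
The plan is to deduce the equivariant statement from the non-equivariant isomorphism $\pi_*(X\wedge M(V))\cong\pi_*(X)\otimes_\Z V$, using naturality of the K\"unneth pairing in each smash factor to carry along the $G$-action. The point to keep in mind is that the assignment $V\mapsto M(V)$ is not functorial, so the $G$-equivariance cannot be obtained by applying a functor to $\rho$; it has to be extracted from the slot-wise naturality of the smash product.

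First I would record the non-equivariant input. By \Cref{prop:rational_ME}, the underlying spectrum of $M(\underline{\rho})$ is equivalent to $HV$, so $\pi_*(M(\underline{\rho}))$ is concentrated in degree $0$, where it is $V$. Since $V$ is a $\Q$-vector space it is flat — indeed torsion-free — as a $\Z$-module, so the universal coefficient short exact sequence for the homotopy groups of a smash product (exactly the one used in the proof of \Cref{prop:rational_ME}, with general $X$ in place of $S^0$) has vanishing $\mathrm{Tor}^{\Z}_1\bigl(\pi_{*-1}(X),V\bigr)$ term. Hence the K\"unneth map
\[
\mu\colon \pi_*(X)\otimes_\Z V \;=\; \pi_*(X)\otimes_\Z\pi_*\bigl(M(\underline{\rho})\bigr) \;\longrightarrow\; \pi_*\bigl(X\wedge M(\underline{\rho})\bigr)
\]
is an isomorphism in each degree. (One could equally write $V$ as a filtered colimit of finitely generated free $\Z$-modules and use that smash product and $\pi_*$ commute with finite wedges and filtered colimits, or invoke the $H\Lbb$-module structure from \Cref{prop:rational_eMoore}, under which $M(\underline{\rho})\simeq HV$ is a finite free $H\Lbb$-module and $X\wedge M(\underline{\rho})\simeq (X\wedge H\Lbb)\wedge_{H\Lbb}HV$.)

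Next I would reinstate the $G$-action. The $G$-action on $X\wedge M(\underline{\rho})$ is the diagonal of the given action on $X$ and the action on $M(\underline{\rho})$ furnished by \Cref{prop:rational_eMoore}, whose restriction to $\pi_0\bigl(M(\underline{\rho})\bigr)=V$ is by construction $\rho$. For $g\in G$, write $g_X$ and $g_M$ for the induced self-maps; since $g_X\wedge g_M=(g_X\wedge 1)\circ(1\wedge g_M)$, applying naturality of $\mu$ once in the left factor and once in the right factor gives $(g_X\wedge g_M)_*\circ\mu=\mu\circ\bigl(\pi_*(g_X)\otimes\rho(g)\bigr)$. Letting $g$ range over $G$, the isomorphism $\mu$ therefore identifies $\pi_*\bigl(X\wedge M(\underline{\rho})\bigr)$ with $\pi_*(X)\otimes_\Z V$ equipped with the diagonal $G$-action, i.e.\ with $\pi_*(X)\otimes\underline{\rho}$. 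Because $\rho$ is $\Lbb$-linear and $\pi_*(g_X)$ is $\Z$-linear, the diagonal action is $\Lbb$-linear for the $\Lbb$-structure on the $V$-factor, so this is an isomorphism of $G$-representations over $\Lbb$, as claimed.

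The step I expect to require the most care is this last one: because $M(-)$ is not a functor, the equivariance genuinely must be threaded through the two separate naturalities of the smash product rather than read off from functoriality of a Moore-spectrum construction. Everything else is the rational universal coefficient theorem already invoked for \Cref{prop:rational_ME}.
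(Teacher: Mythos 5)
Your proof is correct and is essentially the same as the paper's, differing only in presentation. The paper establishes equivariance by invoking the Atiyah--Hirzebruch spectral sequence $E^2_{s,t}=H_s(M(\underline{\rho});\pi_t(X))\Rightarrow\pi_{t+s}(X\wedge M(\underline{\rho}))$, observing it collapses to the $s=0$ line, and then appealing to naturality of the spectral sequence in both the ``space'' and the ``representing spectrum'' slots; you instead work directly with the K\"unneth/universal-coefficient map $\mu\colon\pi_*(X)\otimes_\Z V\to\pi_*(X\wedge M(\underline{\rho}))$ and factor $g_X\wedge g_M=(g_X\wedge 1)\circ(1\wedge g_M)$, applying naturality of $\mu$ one slot at a time. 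Both arguments hinge on the same key insight --- that the $G$-equivariance must be threaded through slot-wise naturality rather than any functoriality of $M(-)$ --- and your version is arguably more elementary since it sidesteps the spectral-sequence machinery altogether, which the flatness of $V$ makes dispensable in any case.
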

	\begin{proof}
		Non-equivariantly, this isomorphism follows from the Universal Coefficient Theorem and the flatness of $V$ as a $\Z$-module. To show it is $G$-equivariant, recall the Universal Coefficient Theorem is a special case of an Atiyah-Hirzebruch spectral sequence, where we view $\pi_*(X\wedge -)$ as a generalized homology theory:
		\begin{equation}\label{eqn:AHSS_UCT}
			E^2_{s,t}=H_s\left(M\left(\underline{\rho}\right);\pi_t(X)\right)\Longrightarrow \pi_{t+s}\left(X\wedge M\left(\underline{\rho}\right)\right)
.		\end{equation}
		The flatness of $V$ as a $\Z$-module and the Universal Coefficient Theorem for ordinary homology groups imply:
		\begin{equation*}
			H_s\left(M\left(\underline{\rho}\right);\pi_t(X)\right)\cong H_s\left(M\left(\underline{\rho}\right);\Z\right)\otimes_\Z\pi_t(X)\cong \left\{\begin{array}{cl}
				\underline{\rho}\otimes \pi_t(X), & s=0;\\
				0,&s\ne 0.
			\end{array}\right.
		\end{equation*}
		Consequently, the $E^2$-page of \eqref{eqn:AHSS_UCT} is concentrated in the $(s=0)$-line, and the spectral sequence collapses. From this we get a sequence of isomorphisms:
		\begin{equation*}
			\pi_t(X)\otimes \underline{\rho}\simto H_0\left(M\left(\underline{\rho}\right);\pi_t(X)\right)\simto\pi_t\left(X\wedge M\left(\underline{\rho}\right)\right).
		\end{equation*}
		We claim both isomorphisms are $G$-equivariant. For the first one, this is because the Universal Coefficient Theorem for ordinary homology is natural in both the space and the coefficient system. The second map is an edge homomorphism for the Atiyah-Hirzebruch spectral sequence \eqref{eqn:AHSS_UCT}. It is $G$-equivariant since the spectral sequence is natural in both the space and the generalized homology theory (representing spectrum). 
	\end{proof}
	\begin{rem}
		\Cref{lem:eUCT} holds more generally for a $G$-action $\rho$ on a flat $\Z$-module $A$, provided $\rho$ can be lifted to a $G$-action on the Moore spectrum $M(A)$.
	\end{rem}	
	\begin{proof}[Proof of \Cref{thm:main}]
		 By \Cref{prop:rational_ME} and \Cref{prop:rational_eMoore}, the equivariant Moore spectrum $M(\underline{\rho})$ is an $H\Lbb$-module spectrum with a $G$-action by $H\Lbb$-module maps. This implies both the fixed point subspaces $\left[\pi_{*}\left(K(\Ocal_\Fbb)\wedge M(\underline{\rho})\right)\right]^G$ and the equivariant homotopy groups $\pi_{*}\left[\left(K(\Ocal_\Fbb)\wedge M(\underline{\rho})\right)^{hG}\right]$ have natural $\Lbb$-vector space structures. Combining \Cref{thm:alg_QB_AL}, \Cref{lem:rational_FPT} and \Cref{lem:eUCT}, we have:
		\begin{align*}
			\ord_{s=1-n}L(s,\rho)&=\dim_\Lbb\left[K_{2n-1}(\Ocal_\Fbb)\otimes_\Z \underline{\rho}\right]^G\\
			&=\dim_\Lbb\left[\pi_{2n-1}\left(K(\Ocal_\Fbb)\wedge M(\underline{\rho})\right)\right]^G\\
			&=\dim_\Lbb\pi_{2n-1}\left[\left(K(\Ocal_\Fbb)\wedge M(\underline{\rho})\right)^{hG}\right]. \qedhere
		\end{align*} 
	\end{proof}
	
	\section{Further discussions: Integral equivariant Moore spectra associated to Galois representations}\label{sec:int_eMoore}
	Having studied the equivariant homotopy groups of $K(\Ocal_\Fbb)$ with coefficients in a \emph{rational} representation $\rho$, a natural question is if there is an \emph{integral} version of this story. The classical Quillen-Lichtenbaum Conjecture, proved by Voevodsky-Rost \cite{Voevodsky_motivic_EM,Haesemeyer-Weibel_norm_residue}, answers this question when $\rho$ is the trivial representation. 
	\begin{thm}[Quillen-Lichtenbaum Conjecture, Voevodsky-Rost, {\cite[199 -- 200]{Kolster_k_thy_arithmetic}}]\label{thm:QLC}
		Let $\Fbb$ be a number field. Denote by $\zeta^*_\Fbb(1-n)$ the leading coefficient in the Taylor expansion of the Dedekind zeta function $\zeta_\Fbb(s)$ at $s=1-n$. Then the following identity
		\begin{equation*}
			\zeta^*_{\Fbb}(1-n)=\pm\frac{|K_{2n-2}(\mathcal{O}_\Fbb)|}{|K_{2n-1}(\mathcal{O}_\Fbb)_{\mathrm{tors}}|}\cdot R^B_n(\Fbb)
		\end{equation*}
		holds up to powers of $2$, where the Borel regulator $R^B_n(\Fbb)$ is the covolume of the lattice $R_\Fbb^n(K_{2n-1}(\Ocal_\Fbb))\subseteq Y_{n-1}(\Fbb)\otimes_\Q\Rbb$ in \eqref{eqn:Borel_reg} ($\overline{Y_0(\Fbb)}\otimes_\Q\Rbb$ when $n=1$). 
	\end{thm}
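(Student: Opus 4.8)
The plan is to reduce this real-analytic identity to a family of $\ell$-adic assertions, one for each odd prime $\ell$, and then to feed in two deep inputs: the Quillen--Lichtenbaum comparison of algebraic and \'etale $K$-theory (Voevodsky--Rost, with the descent arguments of Thomason and of Rognes--Weibel) and the Main Conjecture of Iwasawa theory. First I would dispose of $n=1$, where the formula is Dirichlet's analytic class number formula (with $|K_0(\Ocal_\Fbb)|$ read as the class number and $|K_1(\Ocal_\Fbb)_{\mathrm{tors}}|=w_\Fbb$), and assume $n\ge 2$. Since $R^B_n(\Fbb)$ is transcendental, the first \emph{genuine} step is Borel's theorem, which supplies the order of vanishing appearing in \Cref{thm:QB}, identifies $R^B_n(\Fbb)$ with the covolume of the lattice in \eqref{eqn:Borel_reg}, and --- crucially --- asserts that $\zeta^*_\Fbb(1-n)/R^B_n(\Fbb)$ is a nonzero rational number (the $\pi$-powers and discriminant from the functional equation having been built into the normalization of the universal Borel regulator in \eqref{eqn:KC_reg}). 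Granting this, the theorem (up to powers of $2$) becomes the claim that, writing $v_\ell$ for the $\ell$-adic valuation,
\begin{equation*}
	v_\ell\!\left(\frac{\zeta^*_\Fbb(1-n)}{R^B_n(\Fbb)}\right)=v_\ell\bigl(\#K_{2n-2}(\Ocal_\Fbb)\bigr)-v_\ell\bigl(\#K_{2n-1}(\Ocal_\Fbb)_{\mathrm{tors}}\bigr)
\end{equation*}
for every odd $\ell$; by the functional equation this is equivalent to the analogous statement at the nonzero value $\zeta_\Fbb(n)$, $n\ge 2$.

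Next I would apply Quillen--Lichtenbaum to translate $K$-groups into \'etale cohomology. For $n\ge 2$ and odd $\ell$, the norm residue theorem of Voevodsky and Rost, combined with Thomason-style \'etale descent and the Bott-periodicity argument of Rognes--Weibel, shows that the mod-$\ell$ algebraic $K$-theory of $\Ocal_\Fbb$ coincides with mod-$\ell$ \'etale $K$-theory in degrees $\ge 1$. As $\Ocal_\Fbb[1/\ell]$ has \'etale cohomological dimension $2$, the descent spectral sequence degenerates to $\Z_\ell$-module isomorphisms
\begin{equation*}
	K_{2n-1}(\Ocal_\Fbb)\otimes\Z_\ell\cong H^1(\Ocal_\Fbb[1/\ell],\Z_\ell(n)),\qquad K_{2n-2}(\Ocal_\Fbb)\otimes\Z_\ell\cong H^2(\Ocal_\Fbb[1/\ell],\Z_\ell(n)),
\end{equation*}
with continuous \'etale cohomology on the right. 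In particular $K_{2n-2}(\Ocal_\Fbb)$ is finite with $\ell$-part $\#H^2$, while the $\ell$-part of $K_{2n-1}(\Ocal_\Fbb)_{\mathrm{tors}}$ is $\#(H^1)_{\mathrm{tors}}$. After this substitution the right side of the theorem becomes, up to a power of $2$, the \'etale Euler characteristic $\prod_\ell \#H^2/\#(H^1)_{\mathrm{tors}}$ times the Borel regulator --- precisely the quantity the Lichtenbaum conjecture predicts for $\zeta^*_\Fbb(1-n)$.

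The remaining, and \emph{deepest}, step matches this \'etale Euler characteristic against the $L$-value itself: this is the Main Conjecture of Iwasawa theory in its ``order formula'' guise --- Mazur--Wiles for $\Fbb$ abelian over $\Q$ and Wiles for the totally real case, with the general case reduced to these by Brauer induction (expressing $\zeta_\Fbb$ as an integral product of abelian $L$-functions over subfields) together with Artin formalism for \'etale cohomology; see the formulations of Kolster and of Nguyen Quang Do. It asserts that, for $\Fbb$ and $n\ge 2$, the product over all odd $\ell$ of $\#H^2(\Ocal_\Fbb[1/\ell],\Z_\ell(n))/\#H^1(\Ocal_\Fbb[1/\ell],\Z_\ell(n))_{\mathrm{tors}}$, times the Borel regulator, recovers $\zeta^*_\Fbb(1-n)$ up to a power of $2$. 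Assembling the local contributions over all odd $\ell$ and combining with Borel's rank and regulator computation then yields the global identity with the stated $2$-primary ambiguity.

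I expect the main obstacle to be this last input. Connecting special values of $\zeta_\Fbb$ to orders of \'etale cohomology groups genuinely requires the full strength of the Iwasawa Main Conjecture, and the unavoidable power-of-$2$ indeterminacy in the statement is exactly the cost of $2$-adic descent and of the $2$-part of that conjecture. By contrast, the Quillen--Lichtenbaum comparison of the second step, while it rests on Voevodsky--Rost's norm residue theorem, may by now be invoked as a black box.
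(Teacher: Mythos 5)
The paper does not prove \Cref{thm:QLC}: it is quoted verbatim (as a result of Voevodsky--Rost) from Kolster's survey \cite{Kolster_k_thy_arithmetic}, and the paper uses it only as background motivation for the discussion of integral equivariant Moore spectra. So there is no ``paper's own proof'' to compare against. That said, your sketch is an accurate reconstruction of the standard proof strategy as laid out in that survey and in the literature it references: dispose of $n=1$ via the analytic class number formula (noting the needed reinterpretation of $|K_0(\Ocal_\Fbb)|$, since $K_0(\Ocal_\Fbb)\cong\Z\oplus\mathrm{Cl}(\Fbb)$ is not finite); invoke Borel's theorem for the rationality of $\zeta^*_\Fbb(1-n)/R^B_n(\Fbb)$; use the Voevodsky--Rost norm residue theorem together with Thomason-style \'etale descent (as packaged, e.g., by Rognes--Weibel and Levine) to identify $\ell$-adic $K$-groups with continuous \'etale cohomology of $\Ocal_\Fbb[1/\ell]$ for odd $\ell$ in weights $n\ge 2$; and then appeal to the Iwasawa Main Conjecture (Mazur--Wiles and Wiles, with Brauer induction and Artin formalism for the reduction from a general number field) to match the \'etale Euler characteristic against the $L$-value, the ``up to powers of $2$'' discrepancy coming precisely from the $2$-primary descent and the $2$-part of the Main Conjecture. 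At the level of a sketch I see no gap; a full write-up would of course require precision about the normalization of the Borel regulator and the degeneration of the descent spectral sequence, but those are engineering, not ideas.
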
 
	Let $\Lbb$ be a number field and $\Fbb/\Kbb$ be a finite Galois extensions of number fields. By \cite[Proposition 9.3.5]{Diamond-Shurman}, any Galois representation $\rho\colon G=\gal(\Fbb/\Kbb)\to \GL_d(\Lbb)$ is similar to one that factors through $\GL_d(\Ocal_\Lbb)$. Denote this integral representation by $\Ocal_\rho\colon G\to \GL_d(\Ocal_\Lbb)$. To study equivariant $K$-theory of number fields with coefficients in \emph{integral} Galois representations, we need to lift the $G$-action on $\Ocal_\Lbb^{\oplus d}$ to the Moore spectrum $M(\Ocal_\Lbb^{\oplus d})$. 
	This is a special case of the following question of Steenrod: 
	\begin{quest}[Steenrod, {\cite[page 171]{Carlsson_counterexample}}]\label{quest:Steenrod}
		Let $A$ be an abelian group with a $G$-action. Is there a $G$-action on the Moore spectrum $M(A)$ such that the induced $G$-action on $H_0(M(A);\Z)\cong A$ is isomorphic to the prescribed $G$-action on $A$?
	\end{quest}
	While Steenrod's question does not always have positive answers (see Carlsson's counterexamples in \cite[Theorem 2]{Carlsson_counterexample}), equivariant Moore spectra associated to abelian characters can be constructed explicitly. 
	\begin{thm}[{\cite[Section 3.3]{nz_Dirichlet_J}}]\label{thm:eMoore}
		Let $\psi_m\colon  C_m\to \Z[\zeta_m]^\times$ be the group homomorphism that sends a generator of the cyclic group $C_m$ to a primitive $m$-th root of unity $\zeta_m$. Then the Moore spectrum $M(\Z[\zeta_m])$ has a finite $C_m$-CW spectrum structure, lifting the action of $C_m$ on $\Z[\zeta_m]$ by $\psi_m$ in the sense of Steenrod's \Cref{quest:Steenrod}. 
	\end{thm}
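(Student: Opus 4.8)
The plan is to present $M(\Z[\zeta_m])$ as a short iterated mapping cone of suspension spectra of finite $C_m$-sets. The engine is a very short resolution of $\Z[\zeta_m]$, viewed as a $\Z[C_m]$-module via $\psi_m$, by \emph{permutation} modules whose differentials are \emph{transfer maps}; the point is that such transfers are automatically realized by genuine maps of $C_m$-spectra, which is what makes the construction possible. First I would reduce to prime powers: if $m=ab$ with $\gcd(a,b)=1$, then $C_m\cong C_a\times C_b$ and $\Z[\zeta_m]\cong\Z[\zeta_a]\otimes_\Z\Z[\zeta_b]$ as rings, compatibly with the $\psi$-actions for a suitable choice of roots of unity. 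Since these are free abelian groups, the Künneth theorem gives a $C_a\times C_b\cong C_m$-equivariant equivalence $M(\Z[\zeta_a])\wedge M(\Z[\zeta_b])\simeq M\bigl(\Z[\zeta_a]\otimes_\Z\Z[\zeta_b]\bigr)$ with the $\psi_m$-action on $H_0$. As a smash product of finite $G$-CW spectra over a product of groups is again a finite $G$-CW spectrum, it suffices to build $M(\Z[\zeta_{p^k}])$ for a prime power $m=p^k$.

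For $m=p^k$, identify $\Z[C_{p^k}]\cong\Z[x]/(x^{p^k}-1)$ and $\Z[\zeta_{p^k}]\cong\Z[x]/(\Phi_{p^k}(x))$, where $\Phi_{p^k}(x)=\dfrac{x^{p^k}-1}{x^{p^{k-1}}-1}=\sum_{j=0}^{p-1}x^{jp^{k-1}}$ and $C_{p^k}$ acts by multiplication by $x$. Let $C_p=\langle x^{p^{k-1}}\rangle\le C_{p^k}$ be the subgroup of order $p$, with associated permutation module $\Z[C_{p^k}/C_p]\cong\Z[x]/(x^{p^{k-1}}-1)$. Using the factorization $(x^{p^{k-1}}-1)\,\Phi_{p^k}(x)=x^{p^k}-1$, one checks directly that multiplication by $\Phi_{p^k}(x)$ defines an injective $\Z[C_{p^k}]$-module map with cokernel $\Z[\zeta_{p^k}]$, i.e.\ a short exact sequence
\begin{equation*}
0\longrightarrow\Z[C_{p^k}/C_p]\xrightarrow{\;\cdot\,\Phi_{p^k}(x)\;}\Z[C_{p^k}]\longrightarrow\Z[\zeta_{p^k}]\longrightarrow 0 .
\end{equation*}
The crucial observation is that $\Phi_{p^k}(x)=\sum_{h\in C_p}h$, so the left-hand map is precisely the transfer associated to the covering $C_{p^k}/e\to C_{p^k}/C_p$ of finite $C_{p^k}$-sets. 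Such transfers are morphisms in the Burnside category, hence are represented by a genuine map of $C_{p^k}$-spectra $\tau\colon\Sigma^\infty_+(C_{p^k}/C_p)\to\Sigma^\infty_+(C_{p^k}/e)$ whose effect on $\pi_0$ is the module map above.

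I would then set $M:=\mathrm{Cofib}(\tau)$. As the mapping cone of a map between single-cell finite $C_{p^k}$-CW spectra, $M$ inherits a finite $C_{p^k}$-CW structure (one $0$-cell of orbit type $C_{p^k}/e$ and one $1$-cell of orbit type $C_{p^k}/C_p$). In the integral homology long exact sequence of the cofiber sequence, both outer terms have reduced homology concentrated in degree $0$ and $\tau_*$ is the injection $\cdot\,\Phi_{p^k}(x)$; hence $\widetilde H_n(M;\Z)=0$ for $n\ne 0$ and $\widetilde H_0(M;\Z)\cong\cok(\tau_*)\cong\Z[\zeta_{p^k}]$, with the induced $C_{p^k}$-module structure given by multiplication by $\zeta_{p^k}$, i.e.\ $\psi_{p^k}$. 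Thus $M\simeq M(\Z[\zeta_{p^k}])$ with the prescribed action, and reassembling the prime-power factors via the smash product of the first paragraph finishes the proof. The main obstacle is exactly the bridge between algebra and topology: an arbitrary $\Z[C_m]$-linear map need not be realized by a map of $C_m$-spectra, so the heart of the argument is to exhibit a finite resolution of $\Z[\zeta_m]$ by \emph{permutation} modules whose single differential is a transfer; once that is in place, the finiteness of the cell structure, the homology computation, and the identification of the $\psi_m$-action are all routine.
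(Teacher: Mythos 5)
Your proof is correct and, as far as I can tell, takes essentially the same route as the cited reference: build $M(\Z[\zeta_{p^k}])$ as the cofiber of the $C_{p^k}$-equivariant transfer $\Sigma^\infty_+ C_{p^k}/C_p \to \Sigma^\infty_+ C_{p^k}$, using the fact that $\Phi_{p^k}(x)=\sum_{h\in C_p}h$ is a trace element so that the algebraic differential in the length-one permutation resolution of $\Z[\zeta_{p^k}]$ is realized as an honest stable transfer; then smash the prime-power pieces together via $\Z[\zeta_m]\cong\bigotimes_{p^k\| m}\Z[\zeta_{p^k}]$ and the K\"unneth isomorphism. Each step you take — injectivity of $\cdot\,\Phi_{p^k}(x)$, the identification of the transfer on $\pi_0$, the collapse of the homology long exact sequence, the CRT-compatible choice $\zeta_m=\prod\zeta_{p^k}$, and the flatness ensuring $M(A)\wedge M(B)\simeq M(A\otimes B)$ — checks out, and you correctly identify that the reduction to prime powers is genuinely necessary here, since for composite $m$ the polynomial $(x^m-1)/\Phi_m(x)$ is not of the form $x^d-1$, so $\ker(\cdot\,\Phi_m)$ is not a permutation module and the single-transfer trick breaks down.
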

	\begin{cor}\label{cor:eMoore_char}
		Let $\chi\colon  G\to \Cx$ be an abelian character of a finite group $G$. Then there is a $G$-equivariant integeral Moore spectrum $M(\underline{\Ocal_\chi})$.  
	\end{cor}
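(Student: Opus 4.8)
The plan is to deduce \Cref{cor:eMoore_char} from the cyclic case \Cref{thm:eMoore} by inflating the construction along a suitable quotient of $G$. First I would observe that, since $G$ is finite, the image of $\chi\colon G\to\Cx$ is a finite subgroup of $\Cx$, hence cyclic of some order $m$; after fixing a generator $\zeta_m$ of $\im\chi$ (a primitive $m$-th root of unity) and a lift $g_0\in G$ of a generator of $G/\ker\chi$ with $\chi(g_0)=\zeta_m$, the identification $G/\ker\chi\cong C_m$ exhibits $\chi$ as the composite
\begin{equation*}
	\chi\colon G\xrightarrow{q}C_m\xrightarrow{\psi_m}\Z[\zeta_m]^\times\hookrightarrow\Cx,
\end{equation*}
where $q$ is the resulting surjection and $\psi_m$ is the homomorphism of \Cref{thm:eMoore}. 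Since $\Lbb=\Q(\im\chi)=\Q(\zeta_m)$, the integral representation is $\Ocal_\chi=\Ocal_\Lbb=\Z[\zeta_m]$ with $G$ acting by multiplication through $\chi$; equivalently, as a $\Z[G]$-module $\Ocal_\chi$ is the inflation along $q$ of the $C_m$-module $\Z[\zeta_m]$ on which a generator of $C_m$ acts by multiplication by $\zeta_m$.

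Next I would invoke \Cref{thm:eMoore} to obtain a finite $C_m$-CW spectrum structure on $M(\Z[\zeta_m])$ realizing the $\psi_m$-action, and then inflate it along $q$: concretely, replace each equivariant cell $(C_m/H)_+\wedge S^k$ by $(G/q^{-1}(H))_+\wedge S^k$ and transport the attaching maps along the bijections $G/q^{-1}(H)\cong C_m/H$. This produces a $G$-spectrum (in fact a finite $G$-CW spectrum) with the same underlying spectrum as $M(\Z[\zeta_m])$; since integral homology of a spectrum-with-$G$-action is computed on the underlying spectrum with the action transported, its homology is concentrated in degree $0$, where it is $\Z[\zeta_m]$ with the $G$-action pulled back along $q$, i.e.\ $\Ocal_\chi$. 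Hence this inflated spectrum is the desired $G$-equivariant integral Moore spectrum $M(\underline{\Ocal_\chi})$. If one only needs a na\"ive $G$-action, as elsewhere in this paper, the inflation can be described even more cheaply as precomposition of the functor $BC_m\to\Sp$ with $BG\to BC_m$.

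I do not expect a serious obstacle here: the genuinely new content lies entirely in \Cref{thm:eMoore}, and the only thing this argument adds is the formal fact that the Moore-spectrum construction commutes with inflation of modules along the surjection $q$, which follows from the naturality of $H_*$ in the spectrum variable. The one point that warrants a remark is that $\Ocal_\chi$ depends on a choice of $G$-stable lattice; I would simply fix the standard model $\Ocal_\Lbb=\Z[\zeta_m]$, on which $G$ acts through roots of unity, since this is exactly the lattice realized by \Cref{thm:eMoore}.
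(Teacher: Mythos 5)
Your proposal is correct and follows essentially the same route as the paper: factor $\chi$ through the cyclic quotient $G\twoheadrightarrow C_m$ and pull back (inflate/restrict) the $C_m$-equivariant Moore spectrum $M(\Z[\zeta_m])$ from \Cref{thm:eMoore} along that surjection. The paper phrases this simply as ``restricting the $C_m$-action to $G$ via $\phi_\chi$''; your extra remarks on inflating the $C_m$-CW structure cell by cell and on the na\"ive-action description as precomposition $BG\to BC_m\to\Sp$ are consistent elaborations rather than a different argument.
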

	\begin{proof}
		Notice the abelian character $\chi$ factors as 
		\begin{equation}\label{eqn:ab_char_factor}
			\begin{tikzcd}
				\chi\colon G\rar["\phi_\chi",->>]\ar[rr,bend right=30,"\Ocal_\chi"]&C_m\rar["\psi_m"]& (\Z[\zeta_m])^\x\rar[hook]&\Cx,
			\end{tikzcd}
		\end{equation}
		for some unique integer $m$. We can then set $M(\underline{\Ocal_\chi})$ by restricting the $C_m$-action on the Moore spectrum $M(\Z[\zeta_m])$ in \Cref{thm:eMoore} to $G$ via $\phi_\chi$.
	\end{proof}
	\begin{rem}\label{rem:eMoore_Q}
		Similar to \eqref{eqn:ab_char_factor}, an abelian character $\chi\colon G\to \Cx$ of a finite group $G$ also factors through $\Q_\chi\colon  G\to \Q(\zeta_m)^\x$ for some $m$. Notice the associated Galois representation $\underline{\Q_\chi}$ is $G$-isomorphic to $\underline{\Ocal_\chi}\otimes_\Z \Q$. The uniqueness part of \Cref{prop:rational_eMoore} then implies a rational $G$-equivalence of equivariant Moore spectra:
		\begin{equation*}
			M(\underline{\Ocal_\chi})\wedge S^0_\Q\simeq M(\underline{\Q_\chi}),
		\end{equation*}
		for any model of an integral equivariant Moore spectrum $M(\underline{\Ocal_\chi})$ attached to $\chi$.
	\end{rem}
	\begin{rems} Compared with the nice properties of \emph{rational} (equivariant) Moore spectra in \Cref{prop:rational_ME} and \Cref{prop:rational_eMoore}, \emph{integral} (equivariant) Moore spectra have the following "defects":
		\begin{enumerate}
			\item While integral equivariant Moore spectra exist for abelian characters of finite groups, there are non-equivalent $G$-actions on the Moore spectrum inducing the same action on the homology groups. For example, consider $C_2$-representation spheres of the form $S^{2k(\sigma-1)}$, where $\sigma$ is the real sign representation and $k\ge 0$. Then the induced $C_2$-actions on their zeroth homology groups are all trivial. 
			\item For a number field $\Lbb$, the Moore spectrum $M(\Ocal_\Lbb)$ of its ring of integers does not have an $E_\infty$-ring spectrum structure in general. For example, in \cite{SVW_rou_Einf}, Schw\"{a}nzl-Vogt-Waldhausen showed that there is no way to "adjoining $\sqrt{-1}$" to the sphere spectrum as an $E_\infty$-ring spectrum. This means the Moore spectrum $M(\Z[i])$ of the Gaussian integers $\Z[i]$ does not admit an $E_\infty$-ring spectrum structure.  
		\end{enumerate}
	\end{rems}
	By \Cref{cor:eMoore_char}, integral equivariant Moore spectra $M(\underline{\Ocal_\chi})$ attached to Dirichlet characters $\chi\colon \znx\cong\gal(\Q(\zeta_N)/\Q)\to \Cx$ always exist (though not uniquely). In my thesis \cite{nz_Dirichlet_J}, I computed equivariant homotopy groups of the $J$-spectra $J(N)$ with coefficients in the character $\Ocal_\chi$. These equivariant homotopy groups are related them with the denominators of special values of Dirichlet $L$-functions. In my 
	current work in progress with Elden Elmanto
	, we are studying equivariant algebraic $K$-groups of $\Z[\zeta_N]$ with coefficients in the integral Dirichlet character $\Ocal_\chi$ of the form:
	\begin{equation}\label{eqn:twisted_alg_K}
		\pi_*\left(K(\Z[\zeta_N])\wedge M\left(\underline{\Ocal_\chi}\right)\right)^{\znx}.
	\end{equation}
	 As Dirichlet $L$-functions are special cases of Artin $L$-functions, \Cref{thm:main} and \Cref{rem:eMoore_Q} imply:
	\begin{cor}
		Suppose the image of a Dirichlet character $\chi\colon \znx\to \Cx$ is cyclic of order $m$. Then we have
		\begin{equation*}
			\dim_{\Q(\zeta_m)}\left[\pi_{2n-1}\left(K(\Z[\zeta_N])\wedge M\left(\underline{\Ocal_\chi}\right)\right)^{h \znx}\otimes \Q\right]=\ord_{s=1-n} L(s,\chi).
		\end{equation*}
	\end{cor}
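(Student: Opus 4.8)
The plan is to realize $\chi$ as the datum of an Artin $L$-function for the cyclotomic extension $\Q(\zeta_N)/\Q$, apply \Cref{thm:main}, and then trade the rational equivariant Moore spectrum for the rationalization of an integral one. First I would observe that a Dirichlet character $\chi\colon\znx\to\Cx$ with cyclic image of order $m$ takes values in $\Q(\zeta_m)^\x\subseteq\Cx$, so it defines a one-dimensional Galois representation $\rho=\Q_\chi\colon G\to\aut_{\Q(\zeta_m)}(\Q(\zeta_m))$ for $G=\gal(\Q(\zeta_N)/\Q)\cong\znx$, whose attached Artin $L$-function is the Dirichlet $L$-function $L(s,\chi)$ (see the examples following \Cref{defn:artin_L}). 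As $\Q(\zeta_m)$ is a number field and $\Ocal_{\Q(\zeta_N)}=\Z[\zeta_N]$, \Cref{thm:main} with $\Lbb=\Q(\zeta_m)$ yields
\[
\ord_{s=1-n}L(s,\chi)=\dim_{\Q(\zeta_m)}\pi_{2n-1}\!\left[\left(K(\Z[\zeta_N])\wedge M(\underline{\Q_\chi})\right)^{hG}\right].
\]

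Next I would relate $M(\underline{\Q_\chi})$ to an integral model $M(\underline{\Ocal_\chi})$, which exists by \Cref{cor:eMoore_char}. By \Cref{rem:eMoore_Q} there is a $G$-equivalence $M(\underline{\Ocal_\chi})\wedge S^0_\Q\simeq M(\underline{\Q_\chi})$, so, with $Y=K(\Z[\zeta_N])\wedge M(\underline{\Ocal_\chi})$, we have $Y\wedge S^0_\Q\simeq K(\Z[\zeta_N])\wedge M(\underline{\Q_\chi})$ as $G$-spectra. Since $Y\wedge S^0_\Q$ has rational homotopy groups, \Cref{lem:rational_FPT} (together with \Cref{lem:eUCT}) identifies the right-hand side of the display with $\dim_{\Q(\zeta_m)}\left[\pi_{2n-1}(Y)\otimes\Q\right]^{G}=\dim_{\Q(\zeta_m)}\left[K_{2n-1}(\Z[\zeta_N])\otimes_\Z\underline{\Q_\chi}\right]^{G}$, which is exactly Gross's \Cref{thm:alg_QB_AL}. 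Thus the corollary reduces to the single homotopy-theoretic statement that forming homotopy $G$-fixed points commutes with rationalization on $Y$: that the natural map
\[
\pi_{2n-1}(Y^{hG})\otimes\Q\longrightarrow\pi_{2n-1}\!\left((Y\wedge S^0_\Q)^{hG}\right)
\]
is an isomorphism.

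This last isomorphism is the step I expect to be the main obstacle, since it is \emph{not} formal for a general finite group acting on a bounded-below spectrum --- e.g.\ the analogous map for $S^0$ with trivial $C_2$-action fails, $\pi_*\!\left((S^0)^{hC_2}\right)$ acquiring $2$-adic completions by Lin's theorem. To handle it one must control the convergence of the homotopy fixed point spectral sequence $E_2^{s,t}=H^s(G;\pi_t Y)\Longrightarrow\pi_{t-s}(Y^{hG})$: its terms with $s>0$ are finite (annihilated by $|G|$, since $\pi_* Y\cong K_*(\Z[\zeta_N])\otimes_\Z\Ocal_\chi$ is finitely generated by Quillen and \Cref{lem:eUCT}), so rationally it collapses onto the line $s=0$; the remaining point is to rule out $\ell$-adic completion contributions in the abutment, which should follow from Galois descent for algebraic $K$-theory of number rings showing $\pi_*(Y^{hG})$ is itself finitely generated, so that its positive-filtration part is finite and dies under $\otimes\Q$. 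Two lesser matters round things out. The dimension in the statement is over $\Q(\zeta_m)$, a structure one reads off the rational side: \Cref{prop:rational_ME} and \Cref{prop:rational_eMoore} make $M(\underline{\Q_\chi})$ an $H\Q(\zeta_m)$-module with $G$-action by $H\Q(\zeta_m)$-module maps, so $\left(K(\Z[\zeta_N])\wedge M(\underline{\Q_\chi})\right)^{hG}$ is an $H\Q(\zeta_m)$-module spectrum, and all comparison maps above, being maps of spectra, transport this action back. And since $M(\underline{\Ocal_\chi})$ is neither canonical nor in general a ring spectrum, every assertion is made only after smashing with $S^0_\Q$, where \Cref{rem:eMoore_Q} guarantees the answer is independent of the chosen integral model.
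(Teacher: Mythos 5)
Your proposal correctly identifies the paper's intended route: specialize \Cref{thm:main} to the cyclotomic extension $\Q(\zeta_N)/\Q$ with $\rho=\Q_\chi$ and $\Lbb=\Q(\zeta_m)$, then use \Cref{rem:eMoore_Q} to replace $M(\underline{\Q_\chi})$ by $M(\underline{\Ocal_\chi})\wedge S^0_\Q$. The paper itself offers no further argument (the corollary is presented as an immediate consequence of \Cref{thm:main} and \Cref{rem:eMoore_Q}), so your proof is, up to that point, exactly the paper's.

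Where you part ways with the paper is in flagging that the remaining step --- commuting $(-)^{h\znx}$ past rationalization, i.e.\ proving that the natural map $Y^{hG}\wedge S^0_\Q\to(Y\wedge S^0_\Q)^{hG}$ is a $\pi_{2n-1}$-isomorphism for $Y=K(\Z[\zeta_N])\wedge M(\underline{\Ocal_\chi})$ --- is genuinely not formal. You are right, and your counterexample is apt: for $Y=S^0$ with trivial $C_2$-action, Lin's theorem gives $\pi_{-1}\left((S^0)^{hC_2}\right)\cong\Z_2$, whose rationalization $\Q_2$ is not seen by $(S^0_\Q)^{hC_2}$. The paper's one-line deduction silently assumes this commutation, so you have identified a real gap in the paper rather than introduced one. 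That said, your own sketch for closing it is not yet a proof. The cleanest framing is the one you gesture at via the norm cofiber sequence: the obstruction is exactly the rationalized Tate spectrum $Y^{t\znx}\wedge S^0_\Q$, and one must show its homotopy vanishes in degrees $2n-1$ and $2n-2$. Your alternative route --- establish that $\pi_*(Y^{h\znx})$ is degreewise finitely generated, so that the positive-filtration part of the (conditionally convergent) HFPSS is finite --- would indeed suffice, but the appeal to ``Galois descent for algebraic $K$-theory of number rings'' is too vague to carry that weight: Galois descent statements (Thomason, Quillen--Lichtenbaum) are typically formulated after $\ell$-adic completion or for \'etale $K$-theory and do not directly assert finite generation of $\pi_*\bigl(K(\Z[\zeta_N])^{h\znx}\bigr)$. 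So: right diagnosis, right shape of fix, but the key input (rational triviality of $Y^{t\znx}$ in the relevant range, or equivalently finite generation of the homotopy fixed points) still needs a precise citation or argument, which neither your proposal nor the paper supplies.
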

	We hope to generalize the Quillen-Lichtenbaum Conjecture \Cref{thm:QLC} to Dirichlet $L$-functions by computing the torsion subgroups of equivariant algebraic $K$-groups of $\Z[\zeta_N]$ with coefficients in $\chi$ in \eqref{eqn:twisted_alg_K}. 
	\begin{rem}
		One might further wonder about a potential Quillen-Lichtenbaum Conjecture for Artin $L$-functions. However, it is not clear whether integral equivariant Moore spectra attached to Galois representations $\Ocal_\rho\colon G=\gal(\Fbb/\Kbb)\to \GL_d(\Ocal_\Lbb)$ exist or not when $d>1$.
		
		One attempt is to use the Brauer Induction Theorem \cite[Theorem 20]{Serre_rep_fin_gp}, which states that for a finite group $G$, its complex representation ring $R(G)$ is generated as an abelian group by inductions of abelian characters on subgroups. When $\rho$ is a direct sum of inductions of abelian characters on subgroups,  we can construct a $G$-equivariant integral Moore spectrum  $M(\underline{\Ocal_\rho})$ associated to $\rho$ by bootstrapping the equivariant Moore spectrum $M(\underline{\Ocal_\chi})$ in \Cref{cor:eMoore_char}. When $\rho$ is a virtual difference of two sums of inductions of abelian characters on subgroups, we do not know whether $M(\underline{\Ocal_\rho})$ exists or not. 
	\end{rem}

	\printbibliography
\end{document}